\newtheorem{theorem}[equation]{Theorem}
\newtheorem{lemma}[equation]{Lemma}
\newtheorem{proposition}[equation]{Proposition}
\newtheorem{corollary}[equation]{Corollary}
\newtheorem{conjecture}[equation]{Conjecture}
\newtheorem{problem}[equation]{Problem}
\theoremstyle{definition}
\newtheorem{definition}[equation]{Definition}
\newtheorem{example}[equation]{Example}
\theoremstyle{remark}
\newtheorem{remark}[equation]{Remark}
\numberwithin{equation}{subsection}
\newcommand{\ZZ}{\mathbb{Z}}
\newcommand{\QQ}{\mathbb{Q}}
\newcommand{\RR}{\mathbb{R}}
\newcommand{\CC}{\mathbb{C}}
\newcommand{\Fq}{\mathbb{F}_{q}}
\newcommand{\Fp}{\mathbb{F}_{p}}
\newcommand{\bone}{\mathbf{1}}
\newcommand{\cI}{\mathcal{I}}
\newcommand{\cL}{\mathcal{L}}
\newcommand{\cZ}{\mathcal{Z}}
\newcommand{\IT}{\mathcal{I}^{\mathrm{T}}}
\newcommand{\ITw}{\mathcal{I}^{\mathrm{T}}_{w}}
\newcommand{\sL}{\mathscr{L}}
\newcommand{\sLL}{\mathscr{L}^{\Li}}
\newcommand{\sLz}{\mathscr{L}^{\zeta}}
\newcommand{\sLLd}{\mathscr{L}^{\Li, \dagger}}
\newcommand{\sLzd}{\mathscr{L}^{\zeta, \dagger}}
\DeclareMathAlphabet{\matheur}{U}{eur}{m}{n}
\newcommand{\fZ}{\mathfrak{Z}}
\newcommand{\fa}{\mathfrak{a}}
\newcommand{\fb}{\mathfrak{b}}
\newcommand{\fs}{\mathfrak{s}}
\newcommand{\fh}{\mathfrak{h}}
\newcommand{\fn}{\mathfrak{n}}
\newcommand{\sA}{\mathscr{A}}
\newcommand{\sB}{\mathscr{B}}
\newcommand{\sC}{\mathscr{C}}
\newcommand{\sR}{\mathscr{R}}
\newcommand{\sU}{\mathscr{U}}
\newcommand{\sBC}{\mathscr{BC}}
\DeclareMathOperator{\Ker}{Ker}
 \DeclareMathOperator{\wt}{wt}
\DeclareMathOperator{\Li}{Li}
\DeclareMathOperator{\dep}{dep}
\DeclareMathOperator{\Span}{Span}
\DeclareMathOperator{\Supp}{Supp}
\DeclareFontFamily{OT2}{cmr}{\hyphenchar\font45}
\DeclareFontShape{OT2}{cmr}{m}{n}{%
   <5><6><7><8><9>gen*wncyr%
   <10><10.95><12><14.4><17.28><20.74><24.88>wncyr10}{}
\DeclareFontShape{OT2}{cmr}{b}{n}{%
   <5><6><7><8><9>gen*wncyb%
   <10><10.95><12><14.4><17.28><20.74><24.88>wncyb10}{}
\DeclareMathAlphabet{\mathcyr}{OT2}{cmr}{m}{n}
\DeclareMathAlphabet{\mathcyb}{OT2}{cmr}{b}{n}
\SetMathAlphabet{\mathcyr}{bold}{OT2}{cmr}{b}{n}
\newcommand{\sh}{\mathcyr{sh}}
\newcommand{\ok}{\overline{k}}
\newcommand{\adm}{\mathrm{adm}}
\newcommand{\tpi}{\widetilde{\pi}}
\newcommand{\zetad}{\zeta^{\dagger}}
\newcommand{\Lid}{\Li^{\dagger}}
\newcommand{\cLd}{\mathcal{L}^{\dagger}}
\newcommand{\Lis}{\Li^{\star}}
\definecolor{ForestGreen}{rgb}{0.0, 0.5, 0.0}
\newcommand{\xequal}[2][]{\ext@arrow 0055{\equalfill@}{#1}{#2}}
\def\equalfill@{\arrowfill@\Relbar\Relbar\Relbar}
\title[Involution on a quotient space of MZVs in positive characteristic]{Involution on a quotient space of multiple zeta values in positive characteristic}
\author{Yoshinori Mishiba}
\address{Mathematical Institute, Tohoku University, 6-3, Aramaki Aza-Aoba, Aoba-ku, Sendai 980-8578, Japan}
\email{mishiba@tohoku.ac.jp}
\thanks{This work was supported by JSPS KAKENHI Grant Numbers JP23K03073,JP24H00015.}
\subjclass[2020]{11M32, 11M38, 11R58}
\date{\today}
\begin{document}

\begin{abstract}
In this paper, we introduce multiple zeta dagger values and special values of Carlitz multiple dagger polylogarithms, and study their properties.
In particular, using these values, we construct a non-trivial involution on a certain quotient space of multiple zeta values in positive characteristic.
\end{abstract}

\keywords{}

\date{\today}
\maketitle

\section{Introduction}

\subsection{Motivation}

Let $\cI \coloneqq \bigsqcup_{r \ge 0} \ZZ_{\ge 1}^{r}$ be the set of indices.
The weight (resp.\ depth) of $\fs = (s_{1}, \ldots, s_{r}) \in \cI$ is defined by $\wt(\fs) \coloneqq \sum_{i = 1}^{r} s_{i}$ (resp.\ $\dep(\fs) \coloneqq r$).
The unique index of depth (and weight) zero is denoted by $\emptyset$.
An index $\fs = (s_{1}, \ldots, s_{r}) \in \cI$ is called admissible if $\fs \neq \emptyset$ and $s_{1} > 1$, or $\fs = \emptyset$.
Let $\cI^{\adm}$ be the set of admissible indices.
For each $\fs = (s_{1},\ldots, s_{r}) \in \cI^{\adm}$, the multiple zeta value (MZV) is defined by
\begin{align*}
    \zeta(\fs) \coloneqq \sum_{m_{1} > \cdots > m_{r} \ge 1} \dfrac{1}{m_{1}^{s_{1}} \cdots m_{r}^{s_{r}}} \in \RR.
\end{align*}
Let $\fZ_{\QQ}$ be the $\QQ$-vector space spanned by all MZVs.
By the harmonic (or shuffle) product, $\fZ_{\QQ}$ forms a $\QQ$-algebra.

We recall the duality of multiple zeta values.
For each $s \in \ZZ_{\ge 1}$ and $m \in \ZZ_{\ge 0}$, we write $\{ s \}^{m} \in \cI$ for the $m$-tuple $(s, \ldots, s)$, where $s$ is repeated $m$ times.
The concatenation of indices $\fs_{1}, \ldots, \fs_{m}$ is denoted by $(\fs_{1}, \ldots, \fs_{m})$.
Each $\fs \in \cI^{\adm}$ can be written as
\begin{align*}
    \fs = (a_{1} + 1, \{ 1 \}^{b_{1} - 1}, a_{2} + 1, \{ 1 \}^{b_{2} - 1}, \ldots, a_{n} + 1, \{ 1 \}^{b_{n} - 1}),
\end{align*}
where $a_{i}, b_{i} \in \ZZ_{\ge 1}$.
Then its dual is defined by
\begin{align*}
    \fs^{\dagger} \coloneqq (b_{n} + 1, \{ 1 \}^{a_{n} - 1}, \ldots, b_{2} + 1, \{ 1 \}^{a_{2} - 1}, b_{1} + 1, \{ 1 \}^{a_{1} - 1}) \in \cI^{\adm}.
\end{align*}
The duality of MZVs is given by
\begin{align*}
    \zeta(\fs) = \zeta(\fs^{\dagger})
\end{align*}
for all $\fs \in \cI^{\adm}$ (see \cite[Section 9]{Za94}).
Therefore, the non-trivial involution $\fs \mapsto \fs^{\dagger}$ on the set $\cI^{\adm}$ induces the identity map $\zeta(\fs) \mapsto \zeta(\fs^{\dagger})$ on $\fZ_{\QQ}$.
This implies the following problem:

\begin{problem} \label{problem-motivation}
Construct a non-trivial $\QQ$-algebra involution on $\fZ_{\QQ}$, or on a subquotient of $\fZ_{\QQ}$.
\end{problem}

\begin{example}
Euler showed that $\zeta(2 n) \in \QQ^{\times} \cdot (2 \pi \sqrt{- 1})^{2 n}$ for all $n \in \ZZ_{\ge 1}$.
On the other hand, Lindemann proved that $\pi$ is a transcendental number.
Therefore, we have a non-trivial $\QQ$-algebra involution on $\QQ[\zeta(2)] = \QQ[\pi^{2}] \subset \fZ_{\QQ}$ given by
\begin{align*}
    \zeta(2) \mapsto - \zeta(2).
\end{align*}
\end{example}

In this paper, we study a function field analogue of Problem \ref{problem-motivation}.

\subsection{Positive characteristic} \label{section-positive-characteristic}

Let $q$ be a power of a prime number $p$.
Let $A \coloneqq \Fq[\theta]$ be the polynomial ring in the variable $\theta$ over a finite field $\Fq$ with $q$ elements, $A_{+}$ the set of monic polynomials in $A$, $k \coloneqq \Fq(\theta)$ the fraction field of $A$, $k_{\infty} \coloneqq \Fq(\!(\theta^{- 1})\!)$ the completion of $k$ at the infinite place, $\CC_{\infty} \coloneqq \widehat{\overline{k_{\infty}}}$ the completion of an algebraic closure of $k_{\infty}$, and $\ok$ the algebraic closure of $k$ in $\CC_{\infty}$.
Let $|-|_{\infty}$ be the $\infty$-adic valuation on $\CC_{\infty}$ normalized by $|\theta|_{\infty} = q$.
For each $d \in \ZZ_{\ge 0}$, we set $L_{d} \coloneqq \prod_{1 \leq i \leq d} (\theta - \theta^{q^{i}}) \in A$.
Throughout this paper, we fix an $\Fp[L_{1}]$-subalgebra $R$ of $\ok$, where $\Fp$ is the prime field of $\Fq$.

For each $s \in \ZZ_{\ge 1}$ and $d \in \ZZ_{\ge 0}$, we set
\begin{align*}
    S_{d}(s) \coloneqq \sum_{a \in A_{+}, \, \deg a = d} \dfrac{1}{a^{s}} \in k.
\end{align*}
Let $\fs = (s_{1}, \ldots, s_{r}) \in \cI$.
The multiple zeta value in positive characteristic is defined by Thakur \cite[Definition 5.10.1]{T04} as
\begin{align*}
    \zeta_{A}(\fs) \coloneqq \sum_{d_{1} > \cdots > d_{r} \ge 0} S_{d_{1}}(s_{1}) \cdots S_{d_{r}}(s_{r}) = \sum_{\substack{a_{i} \in A_{+} \\ \deg a_{1} > \cdots > \deg a_{r}}} \dfrac{1}{a_{1}^{s_{1}} \cdots a_{r}^{s_{r}}} \in k_{\infty},
\end{align*}
and the Carlitz multiple polylogarithm (CMPL) is defined by Chang \cite[Definition 5.1.1]{C14} as
\begin{align*}
    \Li_{\fs}(z_{1}, \ldots, z_{r}) \coloneqq \sum_{d_{1} > \cdots > d_{r} \ge 0} \dfrac{z_{1}^{q^{d_{1}}} \cdots z_{r}^{q^{d_{r}}}}{L_{d_{1}}^{s_{1}} \cdots L_{d_{r}}^{s_{r}}} \in k[\![z_{1}, \ldots, z_{r}]\!],
\end{align*}
with the convention that $\zeta_{A}(\emptyset) = \Li_{\emptyset}(z_{1}, \ldots, z_{0}) = 1$.
These are function field analogues of real-valued MZVs and multiple polylogarithms, respectively.
In $\CC_{\infty}$, $\Li_{\fs}(z_{1}, \ldots, z_{r})$ converges when $|z_{1}|_{\infty} < q^{\frac{s_{1} q}{q - 1}}$ and $|z_{i}|_{\infty} \leq q^{\frac{s_{i} q}{q - 1}}$ $(2 \leq i \leq r)$.
In particular, it converges at $\bone \coloneqq (1, \ldots, 1)$.
Although $S_{d}(s)$ and $L_{d}^{- s}$ are different in general, they coincide for $s \leq q$ (see \cite[Theorem 5.9.1]{T04}).
In particular, we have $\zeta_{A}(\fs) = \Li_{\fs}(\bone)$ whenever $s_{1}, \ldots, s_{r} \leq q$.
As a function field analogue of Euler's result, Carlitz \cite[Theorem 9.3]{Ca35} proved that
\begin{align*}
    \zeta_{A}((q - 1) n) \in k^{\times} \cdot \tpi^{(q - 1) n}
\end{align*}
for all $n \in \ZZ_{\ge 1}$, where $\tpi \in \sqrt[q - 1]{- \theta} \cdot  k_{\infty}^{\times}$ is the Carlitz period.

Let $\cZ_{R} \subset \CC_{\infty}$ be the $R$-module spanned by all MZVs in positive characteristic.
Thakur \cite[Theorem 3]{T10} proved that, in positive characteristic, the product of MZVs of weights $w$ and $w'$ can be written as a finite sum of MZVs of weight $w + w'$; this is called the $q$-shuffle product formula.
By \cite[Theorem 2.2.1]{C14}, Chang proved that $\cZ_{\ok}$ (and hence $\cZ_{R}$) has a weight decomposition and the natural surjection $\ok \otimes_{k} \cZ_{k} \to \cZ_{\ok}$ is an isomorphism.
In particular, $\cZ_{R}$ forms a graded $R$-algebra.

Let $\IT \subset \cI$ be the subset of Thakur's indices, i.e.,
\begin{align*}
    \IT \coloneqq \{ (s_{1}, \ldots, s_{r}) \in \cI \, | \, r \in \ZZ_{\ge 0}, \ s_{1}, \ldots, s_{r - 1} \leq q, \ s_{r} \leq q - 1 \}.
\end{align*}
We note that the index $\emptyset$ is an element of $\IT$.
By \cite[Conjecture 8.2]{T17}, Thakur conjectured that the values $\zeta_{A}(\fs)$ $(\fs \in \IT)$ form a basis of $\cZ_{k}$.
This is a refinement of Todd's dimension conjecture \cite[Conjecture 7.1]{To18}.
By \cite[Theorem A]{ND21}, Ngo Dac proved that these values generate all the MZVs over $\Fp[L_{1}]$.
Then Chang, Chen, and Mishiba \cite[Theorem 1.5]{CCM23}, as well as Im, Kim, Le, Ngo Dac, and Pham \cite[Theorem B]{IKLNDP24}, proved that Thakur's conjecture is true.
Therefore, $\zeta_{A}(\fs)$ $(\fs \in \IT)$ form a basis of $\cZ_{R}$.
Note that they also showed that
\begin{align*}
    \Li_{\fs}(\bone) \in \cZ_{R}
\end{align*}
for all $\fs \in \cI$.
In particular, the $R$-module spanned by $\Li_{\fs}(\bone)$ $(\fs \in \cI)$ is also $\cZ_{R}$.

\subsection{Main theorem}

\begin{definition}
For each $\fs = (s_{1}, \ldots, s_{r}) \in \cI$, we define the multiple zeta dagger value (MZDV) $\zetad_{A}(\fs)$, the Carlitz multiple dagger polylogarithm (CMDPL) $\Lid_{\fs}(z_{1}, \ldots, z_{r})$, and its special value $\Lid_{\fs}(\bone)$ by
\begin{align*}
    &\zetad_{A}(\fs) \coloneqq (- 1)^{\dep(\fs)} \sum_{0 \leq d_{1} \leq \cdots \leq d_{r}} S_{d_{1}}(s_{1}) \cdots S_{d_{r}}(s_{r}) = (- 1)^{r} \sum_{\substack{a_{i} \in A_{+} \\ \deg a_{1} \leq \cdots \leq \deg a_{r}}} \dfrac{1}{a_{1}^{s_{1}} \cdots a_{r}^{s_{r}}} \in k_{\infty}, \\
    &\Lid_{\fs}(z_{1}, \ldots, z_{r}) \coloneqq (- 1)^{r} \sum_{0 \leq d_{1} \leq \cdots \leq d_{r}} \dfrac{z_{1}^{q^{d_{1}}} \cdots z_{r}^{q^{d_{r}}}}{L_{d_{1}}^{s_{1}} \cdots L_{d_{r}}^{s_{r}}} \in k[\![z_{1}, \ldots, z_{r}]\!], \\
    &\Lid_{\fs}(\bone) \coloneqq (- 1)^{r} \sum_{0 \leq d_{1} \leq \cdots \leq d_{r}} \dfrac{1}{L_{d_{1}}^{s_{1}} \cdots L_{d_{r}}^{s_{r}}} \in k_{\infty}.
\end{align*}
\end{definition}

One can show that $\zetad_{A}(\fs), \Lid_{\fs}(\bone) \in \cZ_{R}$ for all $\fs \in \cI$ (see, e.g., Corollary \ref{corollary-dagger-in-Z}).
As mentioned in Section \ref{section-positive-characteristic}, we have $S_{d}(s) = L_{d}^{- s}$ for all $d \in \ZZ_{\ge 0}$ and all $s \in \ZZ_{\ge 1}$ with $s \le q$.
Hence $\zetad_{A}(\fs) = \Lid_{\fs}(\bone)$ for all $\fs = (s_{1}, \ldots, s_{r}) \in \cI$ with $s_{1}, \ldots, s_{r} \leq q$.

\begin{remark}
The dagger values arise naturally in the study of $t$-modules and $t$-motives; see \cite{CM19}, \cite{CM21}, and \cite{GND23}.
Note that, in these works, the authors instead considered the multiple zeta star values (MZSVs) $\zeta^{\star}_{A}(\fs) \coloneqq (- 1)^{\dep(\fs)} \zetad_{A}(s_{r}, \ldots, s_{1})$ and the Carlitz multiple star polylogarithms (CMSPLs) $\Lis_{\fs}(z_{1}, \ldots, z_{r}) \coloneqq (- 1)^{\dep(\fs)} \Lid_{(s_{r}, \ldots, s_{1})}(z_{r}, \ldots, z_{1})$.
\end{remark}

\begin{example}
For lower depth cases,
\begin{align*}
    &\zetad_{A}(\emptyset) = \zeta_{A}(\emptyset) = 1, \ \
    \zetad_{A}(s_{1}) = - \zeta_{A}(s_{1}), \ \
    \zetad_{A}(s_{1}, s_{2}) = \sum_{0 \leq d_{1} \leq d_{2}} S_{d_{1}}(s_{1}) S_{d_{2}}(s_{2}), \\
    &\Lid_{\emptyset}(\bone) = \Li_{\emptyset}(\bone) = 1, \ \
    \Lid_{s_{1}}(\bone) = - \Li_{s_{1}}(\bone), \ \
    \Lid_{(s_{1}, s_{2})}(\bone) = \sum_{0 \leq d_{1} \leq d_{2}} \dfrac{1}{L_{d_{1}}^{s_{1}} L_{d_{2}}^{s_{2}}}.
\end{align*}
\end{example}

In the following, $\zeta_{A}(q - 1) \cZ_{R}$ denotes the ideal of $\cZ_{R}$ generated by
\begin{align*}
    \zeta_{A}(q - 1) = \Li_{q - 1}(\bone) = - \zetad_{A}(q - 1) = - \Lid_{q - 1}(\bone),
\end{align*}
and
\begin{align*}
    x \bmod \zeta_{A}(q - 1) \cZ_{R}
\end{align*}
denotes the class of $x \in \cZ_{R}$ in $\cZ_{R} / \zeta_{A}(q - 1) \cZ_{R}$.
The following is the main theorem of the present paper.

\begin{theorem} \label{theorem-main}
There exists a non-trivial $R$-algebra involution $\iota$ on $\cZ_{R} / \zeta_{A}(q - 1) \cZ_{R}$ such that
\begin{align*}
    \iota\bigl(\Li_{\fs}(\bone) \bmod \zeta_{A}(q - 1) \cZ_{R}\bigr) &= \Lid_{\fs}(\bone) \bmod \zeta_{A}(q - 1) \cZ_{R}
\end{align*}
for all $\fs \in \cI$.
\end{theorem}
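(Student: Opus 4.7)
The plan. By the basis theorem of Chang--Chen--Mishiba \cite{CCM23} and Im--Kim--Le--Ngo Dac--Pham \cite{IKLNDP24}, the values $\Li_{\fs}(\bone)$ for $\fs \in \IT$ form an $R$-basis of $\cZ_R$, so the candidate prescription $\iota(\Li_{\fs}(\bone)) := \Lid_{\fs}(\bone) \bmod \zeta_A(q-1)\cZ_R$ on this basis extends uniquely to an $R$-linear endomorphism of $\cZ_R / \zeta_A(q-1)\cZ_R$. The theorem then reduces to four checks: (a) the same formula is satisfied for every $\fs \in \cI$, not only for $\fs \in \IT$; (b) $\iota$ respects multiplication modulo the ideal; (c) $\iota \circ \iota = \Id$; and (d) $\iota$ is not the identity.

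The combinatorial backbone is a contraction formula obtained by grouping equal indices $d_i$ in the definition of $\Lid_{\fs}(\bone)$, namely
\[
\Lid_{\fs}(\bone) = (-1)^{\dep(\fs)} \sum_{(r_1,\ldots,r_k)\, \vDash\, \dep(\fs)} \Li_{(\sigma_k,\ldots,\sigma_1)}(\bone), \qquad \sigma_j := \sum_{r_1+\cdots+r_{j-1} < i \leq r_1+\cdots+r_j} s_i.
\]
This realizes the prescription $\fs \mapsto \Lid_{\fs}$ as a formal $\ZZ$-linear operator $T$ on the free $\ZZ$-module on $\cI$, and a direct induction on depth (the telescoping cancellation is already visible by depth $3$) verifies $T \circ T = \Id$. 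Inverting $T$ also exhibits $\{\Lid_{\fs}(\bone) : \fs \in \IT\}$ as an $R$-basis of $\cZ_R$, so the identity $T^2 = \Id$ will upgrade to (c) as soon as well-definedness is secured.

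The principal task is therefore to establish a parallel product formula for the dagger values. Writing Thakur's $q$-shuffle as $\Li_{\fs}(\bone) \cdot \Li_{\fu}(\bone) = \sum_{\ft} c(\fs,\fu;\ft)\, \Li_{\ft}(\bone)$ with coefficients $c(\fs,\fu;\ft) \in \FF_p$, what must be shown is
\[
\Lid_{\fs}(\bone) \cdot \Lid_{\fu}(\bone) \equiv \sum_{\ft} c(\fs,\fu;\ft)\, \Lid_{\ft}(\bone) \pmod{\zeta_A(q-1)\cZ_R}.
\]
I would derive this by expanding each dagger factor via the contraction formula, applying the $q$-shuffle to each resulting pair of ordinary values, and then re-contracting back into dagger values. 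The boundary terms produced when the two summation ranges collide at a common value should assemble into a multiple of $\zeta_A(q-1) = -\Lid_{q-1}(\bone)$, which explains why the quotient in the theorem is forced. Granting this parallel formula, every $q$-shuffle relation among the ordinary values is mirrored, modulo the ideal, by the analogous relation among the dagger values, so $T$ descends to a well-defined $R$-algebra endomorphism $\iota$ of $\cZ_R / \zeta_A(q-1)\cZ_R$ satisfying (a) and (b).

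Combined with $T \circ T = \Id$, this yields $\iota \circ \iota = \Id$. For non-triviality (d), one exhibits a low-weight $\fs$ for which $\Li_{\fs}(\bone) - \Lid_{\fs}(\bone)$ fails to lie in $\zeta_A(q-1)\cZ_R$, which can be verified using the weight grading together with the $\IT$-basis theorem. The central obstacle in this programme is the parallel $q$-shuffle formula for dagger values modulo $\zeta_A(q-1)\cZ_R$: once it is in place, the rest of the argument is combinatorial bookkeeping organised around the formal involution $T$.
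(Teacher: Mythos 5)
There is a genuine gap, and it sits exactly where you wave your hands. You reduce well-definedness to the statement that the dagger values satisfy, modulo $\zeta_{A}(q-1)\cZ_{R}$, the same product formula as the ordinary values, and you then assert that ``every $q$-shuffle relation among the ordinary values is mirrored \ldots\ so $T$ descends to a well-defined $R$-algebra endomorphism satisfying (a) and (b).'' But the $R$-linear relations among the values $\Li_{\fs}(\bone)$ $(\fs \in \cI)$ are \emph{not} generated by the product formula. (Two small corrections first: for the values $\Li_{\fs}(\bone)$ the relevant product is the plain harmonic product $*^{\Li}$ with no correction terms --- the $q$-shuffle corrections $\Delta^{[j]}_{s,n}$ occur only for $\zeta_{A}$ --- and for the dagger values the harmonic product formula holds \emph{exactly}, not merely modulo the ideal; this is the paper's Proposition \ref{proposition-Lid-prod}, and your contract--multiply--recontract argument would also prove it. That part of your plan is fine.) The real content of well-definedness is your step (a): if $\Li_{\fa}(\bone)=\sum_{\fs\in\IT}c_{\fs}\Li_{\fs}(\bone)$, you must show $\Lid_{\fa}(\bone)\equiv\sum_{\fs\in\IT}c_{\fs}\Lid_{\fs}(\bone)$. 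Equivalently, the $R$-linear map $P\mapsto\Lid_{P}(\bone)$ must annihilate $\Ker(\sLL)$ modulo the ideal. This kernel is large and contains genuinely linear (non-product) relations, the prototype being Thakur's fundamental relation $\Li_{q}(\bone)-L_{1}\Li_{(1,q-1)}(\bone)=0$; multiplicativity of the dagger values gives you no purchase on it whatsoever. Your proposal contains no mechanism for handling these relations, and this is precisely where the bulk of the paper's proof lives.

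Concretely, the paper first identifies an explicit spanning set $\{\sA^{\Li}(\fs;m;\fn)\}$ of $\Ker(\sLL)$ (Proposition \ref{proposition-A-span}, resting on the basis theorem and the operators of \cite{CCM23}), and then proves
\begin{align*}
\Lid_{\sA^{\Li}(\fs;m;\fn)}(\bone)\equiv 0 \bmod \zeta_{A}(q-1)\cZ_{R}
\end{align*}
by combining the antipode-type identity $\sum_{i}\cL(\fs[\,:i])\,\cLd(\fs[i+1:\,])=0$ (Proposition \ref{proposition-prod-sum}, which is an equivalent repackaging of your contraction formula and also yields $T^{2}=\Id$) with the technical Lemma \ref{lemma-key} and an induction on $\dep(\fs)+m$; the congruence, rather than an identity, appears because terms $\cLd((q-1)*\fn[i:\,])=\cLd(q-1)\cLd(\fn[i:\,])$ are divisible by $\zeta_{A}(q-1)$. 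Without an argument of this kind your $\iota$ is not known to be well defined, and your remaining steps (the formal involution $T^{2}=\Id$, the product formula, and the non-triviality check) cannot be assembled into a proof. A secondary unjustified claim: $T^{2}=\Id$ on the free module does not by itself show that $\{\Lid_{\fs}(\bone)\}_{\fs\in\IT}$ is an $R$-basis of $\cZ_{R}$, since $T(\fs)$ for $\fs\in\IT$ is generally supported outside $\IT$; fortunately this claim is not needed once well-definedness and $\iota\circ\iota=\Id$ are established by the route above.
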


By Theorem \ref{theorem-main}, the two collections $\{ \Li_{\fs}(\bone) \}_{\fs \in \cI}$ and $\{ \Lid_{\fs}(\bone) \}_{\fs \in \cI}$ satisfy the same algebraic relations modulo $\zeta_{A}(q - 1) \cZ_{R}$.

\begin{remark}
Since $\iota$ is an involution, we have
\begin{align*}
\iota\bigl(\Lid_{\fs}(\bone) \bmod \zeta_{A}(q - 1) \cZ_{R}\bigr) = \Li_{\fs}(\bone) \bmod \zeta_{A}(q - 1) \cZ_{R},
\end{align*}
and hence
\begin{align*}
\iota\bigl(\Lis_{\fs}(\bone) \bmod \zeta_{A}(q - 1) \cZ_{R}\bigr) = (- 1)^{\dep(\fs)} \Li_{(s_{r}, \ldots, s_{r})}(\bone) \bmod \zeta_{A}(q - 1) \cZ_{R}
\end{align*}
for all $\fs = (s_{1}, \ldots, s_{r}) \in \cI$.
\end{remark}

\begin{example}
By \cite[Theorem 5]{T09b}, Thakur proved that the fundamental relation
\begin{align*}
    \Li_{q}(\bone) - L_{1} \Li_{(1, q - 1)}(\bone) = 0
\end{align*}
holds.
Therefore, according to Theorem \ref{theorem-main}, the values $\Lid_{\fs}(\bone)$ satisfy the same relation modulo $\zeta_{A}(q - 1) \cZ_{R}$.
Indeed, by the harmonic product formula \eqref{eq-product-formula}, we have
\begin{align*}
    \Li_{1}(\bone) \Li_{q - 1}(\bone) = \Li_{(1, q - 1)}(\bone) + \Li_{(q - 1, 1)}(\bone) + \Li_{q}(\bone) = \Li_{(1, q - 1)}(\bone) + \Lid_{(1, q - 1)}(\bone).
\end{align*}
Therefore,
\begin{align*}
    \Lid_{q}(\bone) - L_{1} \Lid_{(1, q - 1)}(\bone) = - L_{1} \Li_{1}(\bone) \Li_{q - 1}(\bone) \equiv 0 \bmod \zeta_{A}(q - 1) \cZ_{R}.
\end{align*}
\end{example}

The following conjecture arises naturally, but it is still open.

\begin{conjecture} \label{conjecture-zetad}
We have
\begin{align*}
    \iota \bigl( \zeta_{A}(\fs) \bmod \zeta_{A}(q - 1) \cZ_{R} \bigr) = \zetad_{A}(\fs) \bmod \zeta_{A}(q - 1) \cZ_{R}
\end{align*}
for all $\fs \in \cI$.
\end{conjecture}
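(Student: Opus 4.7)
The plan is to realize $\iota$ as the descent of an explicit combinatorial involution on the harmonic algebra of indices. Let $\mathcal{H}$ denote the free $R$-module on $\cI$ with the stuffle (harmonic) product $*$; by the harmonic product formula and Ngo Dac's generation result, the $R$-linear map $\Phi \colon \mathcal{H} \to \cZ_R$, $[\fs] \mapsto \Li_\fs(\bone)$, is a surjective $R$-algebra homomorphism. Partitioning the sum in the definition of $\Lid_\fs(\bone)$ according to the equality pattern of the summation indices $0 \le d_1 \le \cdots \le d_r$ yields
\[
\Lid_\fs(\bone) = (-1)^{\dep(\fs)} \sum_c \Li_{\mathrm{rev}(c)}(\bone),
\]
where $c$ runs over the compositions of $\fs = (s_1, \ldots, s_r)$ (groupings of consecutive entries into blocks, summed within each block) and $\mathrm{rev}$ reverses the resulting index. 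This motivates the $R$-linear map $\Xi \colon \mathcal{H} \to \mathcal{H}$ defined by $\Xi([\fs]) := (-1)^{\dep(\fs)} \sum_c [\mathrm{rev}(c)]$, for which $\Phi \circ \Xi$ is precisely the map $[\fs] \mapsto \Lid_\fs(\bone)$.

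Next I verify two combinatorial properties of $\Xi$: (i) $\Xi^2 = \mathrm{id}_{\mathcal{H}}$, which is Möbius inversion on the Boolean lattice of cut sets of an index (checked directly in low depths, then proved in general by induction on depth); and (ii) $\Xi$ is a ring automorphism of $(\mathcal{H}, *)$, i.e., $\Xi(x * y) = \Xi(x) * \Xi(y)$. Property (ii) is a bijective combinatorial identity: terms of $\Xi([\fs_1]) * \Xi([\fs_2])$ are indexed by a composition of $\fs_1$, a composition of $\fs_2$, and a stuffle of the two reversed indices, and I match this data bijectively with a stuffle of $\fs_1$ and $\fs_2$ paired with a composition of the resulting index, reproducing the expansion of $\Xi([\fs_1] * [\fs_2])$; the $(-1)^{\dep(\cdot)}$ signs on both sides are checked to agree.

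With (i) and (ii) in hand, the remaining task is descent to $\overline{\cZ}_R := \cZ_R/\zeta_A(q-1)\cZ_R$: writing $\bar{\Phi} \colon \mathcal{H} \to \overline{\cZ}_R$ for the composition with the projection, I must show $\Xi(\ker \bar{\Phi}) \subseteq \ker \bar{\Phi}$. Since $\{\Li_\bt(\bone) : \bt \in \IT\}$ is an $R$-basis of $\cZ_R$ by the work of Ngo Dac, Chang--Chen--Mishiba, and Im--Kim--Le--Ngo Dac--Pham, $\ker \Phi$ is spanned as an $R$-module by the fundamental identities expressing $\Li_\fs(\bone)$ (for $\fs \notin \IT$) in this basis, and because $\Xi$ is a ring homomorphism the $[q-1]$-multiples that generate $\Phi^{-1}(\zeta_A(q-1)\cZ_R)$ modulo $\ker \Phi$ are sent to $[q-1]$-multiples. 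It hence suffices to establish the congruence $\Lid_\fs(\bone) \equiv \sum_{\bt \in \IT} c_{\fs, \bt} \Lid_\bt(\bone) \pmod{\zeta_A(q - 1) \cZ_R}$ for the fundamental reductions. The example in the introduction is the template: Thakur's depth-reduction $\Li_q(\bone) = L_1 \Li_{(1, q - 1)}(\bone)$ becomes, after a harmonic product manipulation, $\Lid_q(\bone) - L_1 \Lid_{(1, q - 1)}(\bone) = -L_1 \Li_1(\bone) \Li_{q - 1}(\bone)$, which lies in $\zeta_A(q - 1) \cZ_R$ because $\Li_{q - 1}(\bone) = \zeta_A(q - 1)$ is a factor.

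Once descent is obtained, $\Xi$ induces an $R$-algebra endomorphism $\iota$ on $\overline{\cZ}_R$ sending $\Li_\fs(\bone) \bmod \zeta_A(q-1)\cZ_R$ to $\Lid_\fs(\bone) \bmod \zeta_A(q - 1)\cZ_R$, and (i) makes $\iota$ an involution. Non-triviality follows from $\iota(\Li_1(\bone)) = -\Li_1(\bone)$: for odd $p$ this differs from $\Li_1(\bone)$ in $\overline{\cZ}_R$ by weight comparison with $\zeta_A(q-1)$; for $p = 2$ one exhibits a witness at higher depth, where the defect between $\Li_\fs(\bone)$ and $\Lid_\fs(\bone)$ cannot be absorbed into the ideal. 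The main obstacle is the descent step: while the correction term in the example above is manifestly in $\zeta_A(q - 1) \cZ_R$, the general case requires a systematic description of the ``defect'' attached to every fundamental reduction relation among the $\Li_\fs(\bone)$'s and a proof that this defect always factors through $\Li_{q - 1}(\bone)$ via a harmonic product manipulation.
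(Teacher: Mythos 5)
The statement you are proving is Conjecture \ref{conjecture-zetad}, which the paper explicitly leaves open (``The following conjecture arises naturally, but it is still open''); there is no proof in the paper to compare against, only the partial results of Section \ref{section-MZDV}. Your proposal does not prove it either: everything in your argument is phrased in terms of $\Li_{\fs}(\bone)$ and $\Lid_{\fs}(\bone)$, so at best it re-derives Theorem \ref{theorem-main}, whereas the conjecture concerns $\zeta_{A}(\fs)$ and $\zetad_{A}(\fs)$, which differ from the polylogarithm values as soon as some $s_{i} > q$ because $S_{d}(s) \neq L_{d}^{-s}$ there.

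The specific reason your method does not transport to the zeta side is that your starting identity, the block decomposition
\begin{align*}
\Lid_{\fs}(\bone) = (-1)^{\dep(\fs)} \sum_{c} \Li_{\mathrm{rev}(c)}(\bone),
\end{align*}
rests on $L_{d}^{-a} L_{d}^{-b} = L_{d}^{-(a+b)}$, i.e.\ on collapsing a block of equal summation indices into a single factor. For the power sums one has $S_{d}(a) S_{d}(b) \neq S_{d}(a+b)$; the discrepancy is exactly Chen's formula, which produces the $\Delta_{a,b}^{[j]}$ correction terms and hence the extra summand $D_{\fs}(\fn)$ in the $q$-shuffle product $*^{\zeta}$. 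Consequently your map $\Xi$ (the quasi-shuffle antipode) is an automorphism of $(\fh^{1}_{R}, *)$ but is \emph{not} compatible with $*^{\zeta}$, and $\Phi \circ \Xi$ does not compute $\zetad_{A}$. Controlling these $D$-corrections modulo $\zeta_{A}(q-1)\cZ_{R}$ is precisely the open difficulty: the paper manages it only for the depth-one $q$-shuffle product (Proposition \ref{proposition-zetad-prod}, where the corrections are multiples of $\zetad_{A}(j)$ with $(q-1) \mid j$ and hence lie in the ideal) and for the relations $\sA^{\zeta}(\fs;1;\fn)$ under restrictive hypotheses (Proposition \ref{proposition-zeta-A}). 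Separately, even read as a proof of Theorem \ref{theorem-main}, your descent step --- showing that every defect of a fundamental reduction factors through $\Li_{q-1}(\bone)$ --- is the actual content of the paper's argument (Lemma \ref{lemma-key} and the induction on $\dep(\fs)+m$ in Section 3.3), and you acknowledge leaving it unproved.
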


Let $v \in A$ be an irreducible monic polynomial.
In Definition 6.1.1 of \cite{CM21}, Chang and Mishiba defined the $v$-adic MZVs $\zeta_{A}(\fs)_{v}$ $(\fs \in \cI)$ as function field analogues of Furusho's $p$-adic MZVs \cite[Definition 2.17]{F04}.
In Theorem 1.2.3 of \cite{CCM22}, Chang, Chen, and Mishiba proved the existence of a well-defined $R$-algebra homomorphism
\begin{align*}
    \cZ_{R} / \zeta_{A}(q - 1) \cZ_{R} \twoheadrightarrow \cZ_{v, R}
\end{align*}
given by $\zeta_{A}(\fs) \mapsto \zeta_{A}(\fs)_{v}$, where $\cZ_{v, R}$ is the $R$-module spanned by all $v$-adic MZVs.
Moreover, they conjectured that this surjection is an isomorphism (when $R = \ok$).
Therefore, we have the following conjecture:
\begin{conjecture}
The map $\iota$ induces an $R$-algebra involution on $\cZ_{v, R}$.
\end{conjecture}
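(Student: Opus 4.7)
The plan is to construct $\iota$ by producing an explicit $R$-linear involution $\psi$ on the free $R$-module $R\langle \cI \rangle$ of indices with the property that the evaluation $\fs \mapsto \Li_{\psi(\fs)}(\bone)$ coincides with $\fs \mapsto \Lid_{\fs}(\bone)$, and then showing that $\psi$ descends through both evaluation maps to a well-defined $R$-algebra involution on the quotient $\cZ_R / \zeta_A(q - 1) \cZ_R$.

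First, I would establish the combinatorial identity
\begin{equation*}
\Lid_{\fs}(\bone) = (-1)^{\dep(\fs)} \sum_{B} \Li_{\bar{\mathbf{t}}(B)}(\bone),
\end{equation*}
where $B$ ranges over ordered set partitions of $\{1, \ldots, \dep(\fs)\}$ into maximal consecutive blocks and $\bar{\mathbf{t}}(B)$ is the tuple of block sums of $\fs$ listed from the last block to the first. This is obtained by decomposing the non-strict chain $0 \le d_1 \le \cdots \le d_{\dep(\fs)}$ defining $\Lid_{\fs}(\bone)$ according to the pattern of equalities, grouping positions into blocks of constant $d_i$'s, and reversing to match the strictly-decreasing indexing convention of $\Li$. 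Möbius inversion on the Boolean lattice of equality patterns yields the inverse formula of the same shape with $\Li$ and $\Lid$ interchanged, so the associated formal $R$-linear endomorphism $\psi$ on $R\langle \cI \rangle$ is an involution; verifying $\psi^{2} = \mathrm{id}$ is a direct combinatorial check (one can check it by hand through depth three).

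The core step is well-definedness: the rule $\Li_{\fs}(\bone) \mapsto \Lid_{\fs}(\bone) \bmod \zeta_A(q - 1) \cZ_R$ must be single-valued, i.e., every $R$-linear relation $\sum c_{\fs} \Li_{\fs}(\bone) = 0$ in $\cZ_R$ must force $\sum c_{\fs} \Lid_{\fs}(\bone) \in \zeta_A(q - 1) \cZ_R$. I would approach this using that $\{\zeta_A(\fs) : \fs \in \IT\}$ is an $R$-basis of $\cZ_R$ (Chang--Chen--Mishiba; Im--Kim--Le--Ngo Dac--Pham) together with Ngo Dac's algorithm, which produces explicit expansions of any $\Li_{\fs}(\bone)$ and, via the identity above, of any $\Lid_{\fs}(\bone)$ in this basis. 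The task reduces to showing that the two expansions agree modulo $\zeta_A(q - 1) \cZ_R$; this is the main obstacle. The strategy is to track Thakur's $q$-shuffle product formulas along the expansion procedure and isolate at each step a defect supported on products involving $\Li_{q - 1}(\bone) = \zeta_A(q - 1)$. The introduction's worked example, where the relation $\Li_q(\bone) = L_1 \Li_{(1, q - 1)}(\bone)$ produces the dagger-side defect $-L_1 \Li_1(\bone) \Li_{q - 1}(\bone) \in \zeta_A(q - 1)\cZ_R$, exhibits the mechanism in its simplest form.

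Given well-definedness, the remaining checks are comparatively routine. The algebra property of $\iota$ reduces to the observation that the quasi-shuffle product formulas for the $\Li_{\fs}(\bone)$'s and for the $\Lid_{\fs}(\bone)$'s are formally identical once the sign and reversal conventions are unwound; for instance, the depth-one check $\Lid_{s_1}(\bone) \Lid_{s_2}(\bone) = \Lid_{(s_1, s_2)}(\bone) + \Lid_{(s_2, s_1)}(\bone) + \Lid_{s_1 + s_2}(\bone)$ mirrors the $\Li$-formula, and the general case follows by the same bookkeeping on stuffles. Involutivity $\iota^{2} = \mathrm{id}$ descends from $\psi^{2} = \mathrm{id}$ established above. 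Non-triviality is witnessed already at depth one: $\iota(\Li_s(\bone)) = -\Li_s(\bone)$, and provided $p$ is odd and $s$ is not a positive multiple of $q - 1$, the weight grading of $\cZ_R$ forces $\Li_s(\bone) \not\equiv -\Li_s(\bone) \pmod{\zeta_A(q - 1) \cZ_R}$.
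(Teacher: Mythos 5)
Your proposal does not address the statement in question. The statement is a \emph{conjecture} about the $v$-adic algebra $\cZ_{v,R}$: it asserts that the involution $\iota$ (already constructed on $\cZ_{R}/\zeta_{A}(q-1)\cZ_{R}$ by Theorem \ref{theorem-main}) descends through the surjection $\cZ_{R}/\zeta_{A}(q-1)\cZ_{R} \twoheadrightarrow \cZ_{v,R}$, $\zeta_{A}(\fs) \mapsto \zeta_{A}(\fs)_{v}$, of Chang--Chen--Mishiba. The paper offers no proof, and for good reason: the only route currently visible is via the conjecture that this surjection is an isomorphism, which is open. To prove the statement honestly one would have to show that $\iota$ preserves the kernel of that surjection (or that the kernel is zero); your write-up never mentions $v$-adic multiple zeta values, the map to $\cZ_{v,R}$, or its kernel, so the essential content of the statement is entirely untouched.

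What you have written instead is an outline of a proof of Theorem \ref{theorem-main} itself. As such it is broadly parallel to the paper's argument (the combinatorial identity you state is equivalent to Proposition \ref{proposition-prod-sum} unwound, and your $\psi$ is the paper's $\fs \mapsto \Lid_{\fs}(\bone)$ re-expressed), but the step you yourself flag as ``the main obstacle'' --- that every relation $\sum c_{\fs}\Li_{\fs}(\bone)=0$ forces $\sum c_{\fs}\Lid_{\fs}(\bone) \in \zeta_{A}(q-1)\cZ_{R}$ --- is left as a strategy rather than carried out; the paper does this via the explicit generating set $\sA^{\Li}(\fs;m;\fn)$ of $\Ker\sLL$ (Proposition \ref{proposition-A-span}) and the congruence of Lemma \ref{lemma-key}, with an induction on $\dep(\fs)+m$. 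Your non-triviality argument also fails when $q=2$ (there $p=2$ and $q-1=1$, so both the sign trick and the weight-grading argument at depth one collapse; the paper needs a separate weight-$6$ computation). But even a complete proof of Theorem \ref{theorem-main} along your lines would not yield the stated conjecture about $\cZ_{v,R}$.
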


Finally, we briefly mention the characteristic zero case.
Since the ideal $\zeta_{A}(q - 1) \cZ_{R}$ is a function field analogue of $\zeta(2) \fZ_{\QQ}$, it is natural to consider an involution on $\fZ_{\QQ} / \zeta(2) \fZ_{\QQ}$.
However, we note that the characteristic zero analogues of Theorem \ref{theorem-main} and Conjecture \ref{conjecture-zetad} may not hold.
We explain this in Section \ref{subsection-MZDV-char-zero}.

\subsection{Organization of the paper}

In Section \ref{section-non-dagger-values}, we review relations among MZVs and among the special values of CMPLs (non-dagger values) in positive characteristic.
In particular, we review the $q$-shuffle product and harmonic product, and generators of the set of linear relations among non-dagger values.
In Section \ref{section-dagger-values}, we show a certain formula between non-dagger and dagger values.
We then focus on the special values of CMDPLs and prove that the harmonic product formula and certain congruences among them hold.
Using these tools, we prove Theorem \ref{theorem-main}.
In Section \ref{section-MZDV}, we study MZDVs in characteristic $p$ and zero respectively.
In characteristic $p$, we show that the $q$-shuffle product formula among single MZDVs and certain linear relations among MZDVs hold.
These observations are consistent with Conjecture \ref{conjecture-zetad}.
In characteristic zero, we define MZDVs similarly and give an example of a relation among MZVs such that the corresponding relation among MZDVs modulo $\zeta(2) \fZ_{\QQ}$ may not hold.

\section{Properties of non-dagger values} \label{section-non-dagger-values}

\subsection{Algebraic setup} \label{subsection-algebraic-setup}

Let $\fh^{1}_{R}$ be the free $R$-algebra on the set $\ZZ_{\ge 1}$.
We regard $\cI$ as the $R$-basis in $\fh^{1}_{R}$.
Thus, the concatenation $(\fs_{1}, \ldots, \fs_{m})$ of indices $\fs_{1}, \ldots, \fs_{m}$ corresponds to the product of the corresponding monomials in $\fh^{1}_{R}$.
The product on $\fh^{1}_{R}$ is also denoted by
\begin{align*}
    (-, \ldots, -) \colon \bigsqcup_{r \ge 0} (\fh^{1}_{R})^{r} \to \fh^{1}_{R}.
\end{align*}
For each $w \in \ZZ_{\ge 0}$, let $\cI_{w}$ be the set of indices of weight $w$, and let $\ITw \coloneqq \IT \cap \cI_{w}$.
Let $\fh^{1}_{R, w} \subset \fh^{1}_{R}$ be the $R$-submodule spanned by $\cI_{w}$.
Therefore, we have a weight decomposition $\fh^{1}_{R} = \bigoplus_{w \ge 0} \fh^{1}_{R, w}$.
For each $P = \sum_{\fs \in \cI} a_{\fs} \fs \in \fh^{1}_{R}$ $(a_{\fs} \in R)$, we set $\Supp(\fs) \coloneqq \{ \fs \in \cI \, | \, a_{\fs} \neq 0 \}$.

We extend the maps $\cI \ni \fs \mapsto \zeta_{A}(\fs), \zetad_{A}(\fs), \Li_{\fs}(1), \Lid_{\fs}(1) \in \cZ_{R}$ to $R$-linear maps on $\fh^{1}_{R}$, and denote them by the same symbols.
To treat MZ(D)Vs and the special values of CM(D)PLs simultaneously, we define
\begin{align*}
    &\sLz(P) \coloneqq \zeta_{A}(P), \\
    &\sLzd(P) \coloneqq \zetad_{A}(P), \\
    &\sLL(P) \coloneqq \cL(P) \coloneqq \Li_{P}(\bone), \\
    &\sLLd(P) \coloneqq \cLd(P) \coloneqq \Lid_{P}(\bone)
\end{align*}
for each $P \in \fh^{1}_{R}$.
We adopt the notations $\cL(P)$ and $\cLd(P)$ in place of $\Li_{P}(\bone)$ and $\Lid_{P}(\bone)$, respectively, when $P$ is given by a complicated expression.
For example, when $P = \sum_{\fs \in \cI} a_{\fs} \fs$ $(a_{\fs} \in R)$, we have
\begin{align*}
    \sLL(P) = \cL(P) = \Li_{P}(\bone) = \sum_{\fs \in \cI} a_{\fs} \Li_{\fs}(\bone).
\end{align*}

Let $\fs = (s_{1}, \ldots, s_{r}) \in \cI$.
For each $1 \leq j \leq r$, we set
\begin{align*}
    \fs[\,:j] \coloneqq (s_{1}, \ldots, s_{j}), \ \ \fs[j:\,] \coloneqq (s_{j}, \ldots, s_{r}), \ \ \fs[\,:0] \coloneqq \fs[r + 1:\,] \coloneqq \emptyset.
\end{align*}
In the following, we interpret $(s_{i}, \ldots, s_{j}) = \emptyset$ when $i > j$.
When $\fs \neq \emptyset$, we set
\begin{align*}
    \fs_{+} \coloneqq \fs[\,:\dep(\fs) - 1] = (s_{1}, \ldots, s_{r - 1}) \ \ \textrm{and} \ \ \fs_{-} \coloneqq \fs[2:\,] = (s_{2}, \ldots, s_{r}).
\end{align*}

For each $s, n, j \in \ZZ_{\ge 1}$, we define an integer $\Delta_{s, n}^{[j]}$ by
\begin{align*}
    \Delta_{s, n}^{[j]} \coloneqq \left\{ \begin{array}{@{}ll} \displaystyle (- 1)^{s - 1} \binom{j - 1}{s - 1} + (- 1)^{n - 1} \binom{j - 1}{n - 1} & \textrm{if $(q - 1) \mid j$ and $1 \leq j < s + n$} \\ 0 & \textrm{otherwise} \end{array} \right..
\end{align*}
Let $*^{\zeta} \colon (\fh^{1}_{R})^{2} \to \fh^{1}_{R}$ and $* = *^{\Li} \colon (\fh^{1}_{R})^{2} \to \fh^{1}_{R}$ be the $q$-shuffle product and harmonic product, respectively, that is, the $R$-bilinear maps such that
\begin{align*}
    &\emptyset *^{\zeta} P = P *^{\zeta} \emptyset = \emptyset * P = P * \emptyset = P, \\
    &\fs *^{\zeta} \fn = (s_{1}, \fs_{-} *^{\zeta} \fn) + (n_{1}, \fs *^{\zeta} \fn_{-}) + (s_{1} + n_{1}, \fs_{-} *^{\zeta} \fn_{-}) + D_{\fs}(\fn), \\
    &\fs * \fn = (s_{1}, \fs_{-} * \fn) + (n_{1}, \fs * \fn_{-}) + (s_{1} + n_{1}, \fs_{-} * \fn_{-})
\end{align*}
for all $P \in \fh^{1}_{R}$ and $\fs = (s_{1}, \ldots, s_{r}), \fn = (n_{1}, \ldots, n_{\ell}) \in \cI \setminus \{ \emptyset \}$, where we define
\begin{align*}
    D_{\fs}(\fn) \coloneqq \sum_{j = 1}^{s_{1} + n_{1} - 1} \Delta_{s_{1}, n_{1}}^{[j]} (s_{1} + n_{1} - j, (j) *^{\zeta} (\fs_{-} *^{\zeta} \fn_{-})).
\end{align*}
Then
\begin{align} \label{eq-product-formula}
    \sL^{\bullet}(P) \sL^{\bullet}(Q) = \sL^{\bullet}(P *^{\bullet} Q)
\end{align}
for all $\bullet \in \{ \zeta, \Li \}$ and $P, Q \in \fh^{1}_{R}$ (see, e.g., \cite{C14}, \cite{Ch15}, \cite{T10}, and \cite{T17}).
We set $D_{\fs}(\emptyset) \coloneqq 0$, extend $D_{\fs}$ to an $R$-linear map $\fh^{1}_{R} \to \fh^{1}_{R}$, and denote this extension again by $D_{\fs}$.

\subsection{Linear relations among non-dagger values}

Let $\bullet \in \{ \zeta, \Li \}$.
By \cite{CCM23}, \cite{IKLNDP24}, \cite{ND21}, and \cite{To18}, we can explicitly construct a graded $R$-linear map $\sU^{\bullet} \colon \fh^{1}_{R} \to \fh^{1}_{R}$ such that, for each $P \in \fh^{1}_{R}$,
\begin{itemize}
\item $\sL^{\bullet}(\sU^{\bullet}(P)) = \sL^{\bullet}(P)$,
\item there exists $e \in \ZZ_{\ge 0}$ such that $\Supp((\sU^{\bullet})^{e}(P)) \subset \IT$,
\item $\sU^{\bullet}(P) = P$ if $\Supp(P) \subset \IT$.
\end{itemize}
We set
\begin{align*}
    \sR^{\bullet}_{R, w} \coloneqq \Span_{R} \{ \fs - \sU^{\bullet}(\fs) \, | \, \fs \in \cI_{w} \setminus \ITw \}
    = \Span_{R} \{ P - \sU^{\bullet}(P) \, | \, P \in \fh^{1}_{R, w} \} \subset \fh^{1}_{R, w}
\end{align*}
for each $w \in \ZZ_{\ge 0}$, and set
\begin{align*}
    \sR^{\bullet}_{R} \coloneqq \bigoplus_{w \ge 0} \sR^{\bullet}_{R, w} = \Span_{R} \{ P - \sU^{\bullet}(P) \, | \, P \in \fh^{1}_{R} \} \subset \fh^{1}_{R}.
\end{align*}

\begin{proposition} \label{proposition-generators-relations}
For each $\bullet \in \{ \zeta, \Li \}$, we have
\begin{align*}
    \Ker (\sL^{\bullet}|_{\fh^{1}_{R, w}} \colon \fh^{1}_{R, w} \to \cZ_{R}) = \sR^{\bullet}_{R, w} \ (w \ge 0) \ \ \textrm{and} \ \ \Ker (\sL^{\bullet} \colon \fh^{1}_{R} \to \cZ_{R}) = \sR^{\bullet}_{R}.
\end{align*}
\end{proposition}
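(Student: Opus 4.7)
The plan is to prove both equalities together, using the three defining properties of $\sU^{\bullet}$ combined with the basis result for $\ITw$-indexed values due to Chang--Chen--Mishiba and Im--Kim--Le--Ngo Dac--Pham recalled in Section \ref{section-positive-characteristic}. The easy inclusion $\sR^{\bullet}_{R, w} \subset \Ker(\sL^{\bullet}|_{\fh^{1}_{R, w}})$ is immediate from the first bullet: for any $P \in \fh^{1}_{R, w}$, one has $\sL^{\bullet}(P - \sU^{\bullet}(P)) = \sL^{\bullet}(P) - \sL^{\bullet}(P) = 0$, so every generator of $\sR^{\bullet}_{R, w}$ lies in the kernel.

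For the reverse inclusion, the key input is that $\sL^{\bullet}$ is injective on the $R$-submodule spanned by $\ITw$. For $\bullet = \zeta$ this is precisely the basis theorem recalled in Section \ref{section-positive-characteristic}. For $\bullet = \Li$ it follows from the same statement together with the identity $\zeta_{A}(\fs) = \Li_{\fs}(\bone)$ for every $\fs \in \IT$; indeed, every coordinate of a Thakur index is at most $q$, and $S_{d}(s) = L_{d}^{-s}$ whenever $s \leq q$. In particular, $\{\Li_{\fs}(\bone)\}_{\fs \in \ITw}$ is also an $R$-basis of the weight-$w$ component of $\cZ_{R}$.

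Given $P \in \fh^{1}_{R, w} \cap \Ker(\sL^{\bullet})$, I would invoke the second property of $\sU^{\bullet}$ to fix $e \in \ZZ_{\ge 0}$ with $\Supp\bigl((\sU^{\bullet})^{e}(P)\bigr) \subset \IT$; since $\sU^{\bullet}$ is graded, the support in fact lies in $\ITw$. The telescoping identity
\[
P - (\sU^{\bullet})^{e}(P) \;=\; \sum_{i = 0}^{e - 1} \Bigl( (\sU^{\bullet})^{i}(P) - \sU^{\bullet}\bigl((\sU^{\bullet})^{i}(P)\bigr) \Bigr)
\]
exhibits the left-hand side as a sum of generators of $\sR^{\bullet}_{R, w}$, because each $(\sU^{\bullet})^{i}(P)$ still lies in $\fh^{1}_{R, w}$. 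Applying $\sL^{\bullet}$ and using the first property gives $\sL^{\bullet}\bigl((\sU^{\bullet})^{e}(P)\bigr) = \sL^{\bullet}(P) = 0$, and the injectivity just recalled forces $(\sU^{\bullet})^{e}(P) = 0$. Hence $P \in \sR^{\bullet}_{R, w}$, proving the weight-graded statement. For the ungraded version I would use that $\cZ_{R}$ carries a weight decomposition, so that $\Ker(\sL^{\bullet}) = \bigoplus_{w \ge 0} \Ker(\sL^{\bullet}|_{\fh^{1}_{R, w}})$, and then assemble the ungraded equality from the graded pieces.

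I do not anticipate any genuine obstacle: once the $\ITw$-basis theorem is in hand, the argument is essentially formal, relying on the three listed properties of $\sU^{\bullet}$ and on $\sU^{\bullet}$ being graded so that the telescoping stays inside $\fh^{1}_{R, w}$. The only delicate point to verify is that the third bullet (which ensures $\sU^{\bullet}$ fixes elements already supported on $\IT$) is what makes the iteration terminate in a controlled way so that the telescoped sum lies in the module $\sR^{\bullet}_{R, w}$ as defined, rather than merely in a completion or a limit.
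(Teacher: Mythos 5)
Your proposal is correct and follows essentially the same route as the paper: the easy inclusion from the first property of $\sU^{\bullet}$, then iterating $\sU^{\bullet}$ until the support lies in $\IT$, observing that the telescoped difference lies in $\sR^{\bullet}_{R}$, and invoking the linear independence of the values $\sL^{\bullet}(\fs)$ $(\fs \in \IT)$ to conclude $(\sU^{\bullet})^{e}(P) = 0$. The only cosmetic difference is the order of reduction (the paper deduces the graded statement from the ungraded one by intersecting with $\fh^{1}_{R, w}$, whereas you assemble the ungraded one from the graded pieces), and your explicit justification of the $\bullet = \Li$ case via $\zeta_{A}(\fs) = \Li_{\fs}(\bone)$ for $\fs \in \IT$ is a correct spelling-out of what the paper leaves implicit.
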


\begin{proof}
The case $(R, \bullet) = (k, \zeta)$ was proved in Theorem 3.6 of \cite{CCM23}.
The general case can be proved in a similar manner, but we note that, in general, $R$ is not a field, and the proof requires slight modifications.
Since
\begin{align*}
    \fh^{1}_{R, w} \cap \Ker (\sL^{\bullet} \colon \fh^{1}_{R} \to \cZ_{R}) = \Ker (\sL^{\bullet}|_{\fh^{1}_{R, w}} \colon \fh^{1}_{R, w} \to \cZ_{R}) \ \ \textrm{and} \ \ \fh^{1}_{R, w} \cap \sR^{\bullet}_{R} = \sR^{\bullet}_{R, w}
\end{align*}
for each $w \in \ZZ_{\geq 0}$, it is enough to show that the second assertion of the proposition.

The inclusion $\Ker \sL^{\bullet} \supset \sR^{\bullet}_{R}$ follows from the first property of $\sU^{\bullet}$.
Let $P \in \Ker \sL^{\bullet}$.
Let $e \ge 0$ be an integer such that $\Supp((\sU^{\bullet})^{e}(P)) \subset \IT$.
We write $(\sU^{\bullet})^{e}(P) = \sum_{\fs \in \IT} a_{\fs} \fs$ $(a_{\fs} \in R)$.
Then we have
\begin{align*}
    \sum_{\fs \in \IT} a_{\fs} \sL^{\bullet}(\fs) = \sL^{\bullet}((\sU^{\bullet})^{e}(P)) = \sL^{\bullet}((\sU^{\bullet})^{e - 1}(P)) = \cdots = \sL^{\bullet}(P) = 0.
\end{align*}
By the linear independence of $\sL^{\bullet}(\fs)$ $(\fs \in \IT)$, we have $a_{\fs} = 0$ for all $\fs \in \IT$.
Thus
\begin{align*}
    P \equiv \sU^{\bullet}(P) \equiv (\sU^{\bullet})^{2}(P) \equiv \cdots \equiv (\sU^{\bullet})^{e}(P) = 0 \bmod \sR^{\bullet}_{R},
\end{align*}
and hence $P \in \sR^{\bullet}_{R}$.
Therefore, we have the second assertion of the proposition.
\end{proof}

For each $\bullet \in \{ \zeta, \Li \}$, $c \in \ZZ_{\ge 1}$, and $\fs \in \cI$, let $\alpha_{c; \fs}^{\bullet} \colon \fh^{1}_{R} \to \fh^{1}_{R}$ be the $R$-linear map defined by
\begin{align*}
    \alpha_{c; \fs}^{\bullet}(P) \coloneqq (c, \fs *^{\bullet} P)
\end{align*}
for all $P \in \fh^{1}_{R}$.
Moreover, for each $\ell \in \ZZ_{\ge 0}$, $\alpha_{c; \fs}^{\bullet, \ell} \coloneqq \alpha_{c; \fs}^{\bullet} \circ \cdots \circ \alpha_{c; \fs}^{\bullet}$ denotes the $\ell$-fold composition of $\alpha_{c; \fs}^{\bullet}$, with $\alpha_{c; \fs}^{\bullet, 0}$ being the identity map.
Finally, let $\boxplus \colon (\fh^{1}_{R})^{2} \to \fh^{1}_{R}$ be the $R$-bilinear map such that
\begin{align*}
    \emptyset \boxplus P = P \boxplus \emptyset = 0 \ \ \textrm{and} \ \ \fs \boxplus \fn = (\fs_{+}, s_{r} + n_{1}, \fn_{-})
\end{align*}
for all $P \in \fh^{1}_{R}$ and $\fs = (s_{1}, \ldots, s_{r}), \fn = (n_{1}, \ldots, n_{\ell}) \in \cI \setminus \{ \emptyset \}$.

For each $\fs = (s_{1}, \ldots, s_{r}) \in \cI \setminus \{ \emptyset \}$, $\fn \in \cI$, and $m \in \ZZ_{\ge 1}$, we define
\begin{align*}
    \sA^{\zeta}(\fs; m; \fn) &\coloneqq (\fs, \{ q \}^{m}, \fn) + (\fs, \{ q \}^{m} \boxplus \fn) + (\fs, \{ q \}^{m - 1}, D_{q}(\fn)) \\
    &\phantom{\coloneqq} \; - L_{1}^{m} (\fs, \alpha_{1; q - 1}^{\zeta, m}(\fn)) - L_{1}^{m} (\fs \boxplus \alpha_{1; q - 1}^{\zeta, m}(\fn)) - L_{1}^{m} (\fs_{+}, D_{s_{r}}(\alpha_{1; q - 1}^{\zeta, m}(\fn))) \in \fh^{1}_{R}
\end{align*}
and
\begin{align*}
    \sA^{\zeta}(\emptyset; m; \fn) &\coloneqq (\{ q \}^{m}, \fn) + (\{ q \}^{m} \boxplus \fn) + (\{ q \}^{m - 1}, D_{q}(\fn)) - L_{1}^{m} (\alpha_{1; q - 1}^{\zeta, m}(\fn)) \in \fh^{1}_{R}.
\end{align*}
We note that $(\fs_{+}, D_{s_{r}}(\alpha_{1; q - 1}^{\zeta, m}(\fn))) = 0$ when $s_{r} < q$, in particular for the case $\fs \in \IT$ (see Remarks 2.3 and 2.6 of \cite{CCM23}).
Similarly, for each $\fs, \fn \in \cI$ and $m \in \ZZ_{\ge 1}$, we define
\begin{align*}
    \sA^{\Li}(\fs; m; \fn) &\coloneqq (\fs, \{ q \}^{m}, \fn) + (\fs, \{ q \}^{m} \boxplus \fn) - L_{1}^{m} (\fs, \alpha_{1; q - 1}^{\Li, m}(\fn)) - L_{1}^{m} (\fs \boxplus \alpha_{1; q - 1}^{\Li, m}(\fn)) \in \fh^{1}_{R}.
\end{align*}

\begin{proposition} \label{proposition-A-span}
We have $\sL^{\bullet}(\sA^{\bullet}(\fs; m; \fn)) = 0$ for all $\bullet \in \{ \zeta, \Li \}$, $\fs, \fn \in \cI$, and $m \in \ZZ_{\ge 1}$.
Moreover, for each $\bullet \in \{ \zeta, \Li \}$, the family
\begin{align*}
    \{ \sA^{\bullet}(\fs; m; \fn) \, | \, \fs, \fn \in \cI, \, m \in \ZZ_{\ge 1} \}
\end{align*}
spans $\Ker (\sL^{\bullet} \colon \fh^{1}_{R} \to \cZ_{R})$ over $R$.
\end{proposition}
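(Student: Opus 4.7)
The plan is to establish the two assertions separately.

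For the vanishing, the strategy is to handle $\fs = \emptyset$ first and then reduce the general case to it. In the base case, I would prove the identity
\[
\sL^{\bullet}\bigl((\{q\}^m, \fn) + (\{q\}^m \boxplus \fn) + C^{\bullet}(m, \fn)\bigr) = L_{1}^{m} \sL^{\bullet}\bigl(\alpha_{1; q - 1}^{\bullet, m}(\fn)\bigr),
\]
where $C^{\Li}(m, \fn) = 0$ and $C^{\zeta}(m, \fn) = (\{q\}^{m - 1}, D_{q}(\fn))$. This extends Thakur's relation $\sL^{\bullet}(q) = L_{1} \sL^{\bullet}((1, q - 1))$ from \cite{T09b} to all $m \ge 1$ and all tails $\fn$, and I would prove it by induction on $m$: the base case $m = 1$ is Thakur's theorem combined with a single application of \eqref{eq-product-formula} to multiply by $\sL^{\bullet}(\fn)$, and the inductive step peels off one factor $(1, q - 1)$ from $\alpha_{1; q - 1}^{\bullet, m}(\fn) = (1, (q - 1) *^{\bullet} \alpha_{1; q - 1}^{\bullet, m - 1}(\fn))$ and reduces to the case $m - 1$ with a modified tail. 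The passage from $\fs = \emptyset$ to general $\fs$ is then formal: the terms $(\fs, -)$ and $(\fs \boxplus -)$ (together with $(\fs_{+}, D_{s_{r}}(-))$ in the $\zeta$ case) appearing in $\sA^{\bullet}(\fs; m; \fn)$ are exactly the pieces that appear upon multiplying $\sA^{\bullet}(\emptyset; m; \fn)$ by $\sL^{\bullet}(\fs)$ via the product formula \eqref{eq-product-formula}, so the vanishing at $\emptyset$ propagates.

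For the spanning claim, I would invoke Proposition \ref{proposition-generators-relations} to reduce matters to showing $\sR^{\bullet}_{R} \subset \Span_{R} \{\sA^{\bullet}(\fs; m; \fn)\}$. Since $\sR^{\bullet}_{R}$ is generated by the elements $P - \sU^{\bullet}(P)$, it suffices to show each such difference is a finite $R$-linear combination of $\sA^{\bullet}$'s. In the constructions of $\sU^{\bullet}$ given in \cite{ND21}, \cite{CCM23}, and \cite{IKLNDP24}, the reduction of an index $\fs' \notin \IT$ proceeds by iteratively locating a sub-block of the form $(\fs, \{q\}^m, \fn)$ inside it and replacing it using precisely the identity $\sA^{\bullet}(\fs; m; \fn) = 0$. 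Hence each single reduction step changes the representative by an element of $\Span_{R} \{\sA^{\bullet}\}$, and $P - \sU^{\bullet}(P)$ telescopes into such a combination.

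The main obstacle is expected to be the $\zeta$-case of the base identity, where the correction terms $D_{\fs}(\fn)$ in the definition of the $q$-shuffle force the extra summands $(\{q\}^{m - 1}, D_{q}(\fn))$ and $(\fs_{+}, D_{s_{r}}(-))$ into $\sA^{\zeta}$. Verifying that these match the correct cross-terms in the inductive step demands delicate bookkeeping of the binomial coefficients encoded in $\Delta_{s, n}^{[j]}$, and is presumably where most of the technical effort lies. By contrast, the $\Li$-case is considerably cleaner because the harmonic product carries no analogous correction.
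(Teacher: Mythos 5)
The spanning half of your argument is essentially the paper's: it reduces to Proposition \ref{proposition-generators-relations} together with the fact, extracted from the construction of $\sU^{\bullet}$ in \cite{CCM23}, that each difference $\fa - \sU^{\bullet}(\fa)$ ($\fa \in \cI \setminus \IT$) is itself a single $\sA^{\bullet}(\fs; m; \fn)$ (no telescoping is even needed, since Proposition \ref{proposition-generators-relations} already says these one-step differences span the kernel). That part is fine.

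The vanishing half has a genuine gap. Your base case claims that $\sL^{\bullet}(\sA^{\bullet}(\emptyset;1;\fn)) = 0$ follows from Thakur's relation $\sL^{\bullet}(q) = L_1 \sL^{\bullet}(1,q-1)$ by multiplying both sides by $\sL^{\bullet}(\fn)$ via \eqref{eq-product-formula}. It does not: the product interleaves $\fn$ throughout the index, producing cross terms that are absent from $\sA^{\bullet}(\emptyset;1;\fn)$. Concretely, for $\bullet = \Li$ and $\fn = (n)$ one computes
\begin{align*}
\Li_{q}(\bone)\Li_{n}(\bone) - L_1 \Li_{(1,q-1)}(\bone)\Li_{n}(\bone) = \sL^{\Li}\bigl(\sA^{\Li}(\emptyset;1;(n))\bigr) + \sL^{\Li}\bigl(\sA^{\Li}((n);1;\emptyset)\bigr),
\end{align*}
so the product-formula manipulation only shows that a \emph{sum} of two instances of the proposition vanishes, not that each one does. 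The individual vanishing of, say, $\sL^{\Li}(\sA^{\Li}((n);1;\emptyset)) = \Li_{(n,q)}(\bone) - L_1\Li_{(n,1,q-1)}(\bone) - L_1\Li_{(n+1,q-1)}(\bone)$ genuinely requires that Thakur's relation holds at the level of power sums, $L_d^{-q} = L_1 L_{d+1}^{-1}\sum_{i=0}^{d} L_i^{-(q-1)}$ for every $d$ (note the shift $d \mapsto d+1$ and the inner sum), substituted inside the iterated sum; the boundary case of the shifted summation variable is exactly what produces the $\boxplus$-terms. The same objection defeats your passage from $\fs = \emptyset$ to general $\fs$: multiplying $\sA^{\bullet}(\emptyset;m;\fn)$ by $\sL^{\bullet}(\fs)$ does not yield $\sA^{\bullet}(\fs;m;\fn)$ modulo known relations, because in $\sA^{\bullet}(\fs;m;\fn)$ the prefix $\fs$ sits rigidly in front (with only one $\boxplus$ and one $D$ correction at the junction), whereas the harmonic or $q$-shuffle product scatters it. This is precisely why the paper does not argue with the product formula at all but writes $\sA^{\bullet}(\fs;m;\fn) = \beta(\sB^{\bullet}_{\fs}(\sBC^{\bullet,m-1}_{q}(\sC^{\bullet}_{\fn}(R_1))))$ with $R_1 = (q, -L_1(1,q-1))$ and invokes \cite[Proposition A.4]{CCM23}: the operators $\sB^{\bullet}_{\fs}$, $\sC^{\bullet}_{\fn}$, $\sBC^{\bullet,\ell}_{q}$ act on \emph{pairs} exactly in order to track the shifted summation index, which is the ingredient missing from your argument. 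A joint induction absorbing all the leftover cross terms might be salvageable, but the claimed ``single application of the product formula'' and the ``formal'' passage to general $\fs$ are where the proof as written breaks.
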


\begin{proof}
Fix $\bullet \in \{ \zeta, \Li \}$.
For each $\fs, \fn \in \cI \setminus \{ \emptyset \}$ and $\ell \in \ZZ_{\ge 0}$, let $\sB^{\bullet}_{\fs}, \sC^{\bullet}_{\fn}, \sBC^{\bullet, \ell}_{q}$ be the endomorphisms on $(\bigoplus_{w > 0} \fh^{1}_{R, w})^{2}$ given explicitly in Section A.3 of \cite{CCM23}.
We note that these maps were defined only in the case of $R = k$, but the definition is the same for general $R$.
We also define $\sB^{\bullet}_{\emptyset}$ and $\sC^{\bullet}_{\emptyset}$ to be the identity map on $(\bigoplus_{w > 0} \fh^{1}_{R, w})^{2}$.
Let $\beta \colon (\bigoplus_{w > 0} \fh^{1}_{R, w})^{2} \to \bigoplus_{w > 0} \fh^{1}_{R, w}$ be the map defined by $\beta(P, Q) \coloneqq P + Q$.
Then for each $\fs, \fn \in \cI$ and $m \in \ZZ_{\ge 1}$, we have
\begin{align*}
    \sA^{\bullet}(\fs; m; \fn) = \beta(\sB^{\bullet}_{\fs}(\sBC^{\bullet, m - 1}_{q}(\sC^{\bullet}_{\fn}(R_{1})))),
\end{align*}
where
\begin{align*}
    R_{1} \coloneqq (q, - L_{1} (1, q - 1)) \in (\fh^{1}_{R, q})^{2}
\end{align*}
is the fundamental relation corresponding to the relation
\begin{align*}
    S_{d}(q) - L_{1} S_{d + 1}(1) \sum_{i = 0}^{d} S_{i}(q - 1) = 0 \ \ (d \ge 0)
\end{align*}
(which is equal to $\displaystyle \dfrac{1}{L_{d}^{q}} - \dfrac{L_{1}}{L_{d + 1}} \sum_{i = 0}^{d} \dfrac{1}{L_{i}^{q - 1}} = 0$) given by Thakur in Section 3.4.6 of \cite{T09b}.
By \cite[Proposition A.4]{CCM23}, we have $\sA^{\bullet}(\fs; m; \fn) \in \Ker \sL^{\bullet}$.
Moreover, by the proof of Theorem A.5 of \cite{CCM23}, for each $\fa \in \cI \setminus \IT$, we can find $\fs, \fn \in \cI$ and $m \in \ZZ_{\ge 1}$ such that
\begin{align*}
    \fa - \sU^{\bullet}(\fa) = \sA^{\bullet}(\fs; m; \fn).
\end{align*}
Since $\{ \fa - \sU^{\bullet}(\fa) \, | \, \fa \in \cI \setminus \IT \}$ spans $\Ker \sL^{\bullet}$ by Proposition \ref{proposition-generators-relations}, we have the desired result.
\end{proof}

\begin{remark}
Let $\cI' \coloneqq \{ \emptyset \} \cup \{ (n_{1}, \ldots, n_{\ell}) \in \cI \setminus \{ \emptyset \} \, | \, n_{1} > q \}$.
Then any $\fa \in \cI$ can be expressed uniquely as $\fa = (\fs, \{ q \}^{m - 1}, \fn)$ with $\fs \in \IT$, $m \in \ZZ_{\ge 1}$, and $\fn \in \cI'$.
In this case, we have $\fa - \sU^{\bullet}(\fa) = \sA^{\bullet}(\fs; m; \fn)$.
\end{remark}

\section{Properties of dagger values} \label{section-dagger-values}

\subsection{Relations between non-dagger and dagger values}

\begin{proposition} \label{proposition-prod-sum}
For each $\bullet \in \{ \zeta, \Li \}$ and $\fs \in \cI \setminus \{ \emptyset \}$, we have
\begin{align*}
    \sum_{i = 0}^{\dep(\fs)} \sL^{\bullet}(\fs[\,:i]) \sL^{\bullet, \dagger}(\fs[i + 1:\,]) = 0
    \ \ \ \textrm{and} \ \ \
    \sum_{i = 0}^{\dep(\fs)} \sL^{\bullet, \dagger}(\fs[\,:i]) \sL^{\bullet}(\fs[i + 1:\,]) = 0.
\end{align*}
\end{proposition}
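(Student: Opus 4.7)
The plan is to prove both identities by unfolding the defining series and exhibiting a sign cancellation for every fixed tuple of summation indices. Note that the two identities are related by swapping the roles of $\sL^{\bullet}$ and $\sL^{\bullet,\dagger}$, which corresponds to interchanging ``strictly decreasing'' and ``weakly increasing'' chains in the indices; essentially the same combinatorial argument handles both. Moreover, the case $\bullet=\zeta$ reduces to the case $\bullet=\Li$ because only the combinatorics of the indices $d_{j}$ matters, and the replacement $L_{d_{j}}^{-s_{j}}\mapsto S_{d_{j}}(s_{j})$ does not affect it.

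For the first identity with $\bullet=\Li$, I would unfold the definitions to rewrite
\begin{align*}
\sum_{i=0}^{r}\sLL(\fs[\,:i])\,\sLLd(\fs[i+1:\,])
=\sum_{(d_{1},\ldots,d_{r})\in\ZZ_{\ge 0}^{r}}c(d_{1},\ldots,d_{r})\prod_{j=1}^{r}\frac{1}{L_{d_{j}}^{s_{j}}},
\end{align*}
where $r=\dep(\fs)$ and
\begin{align*}
c(d_{1},\ldots,d_{r})=\sum_{\substack{0\le i\le r\\ d_{1}>\cdots>d_{i}\\ d_{i+1}\le\cdots\le d_{r}}}(-1)^{r-i},
\end{align*}
with the convention that a chain of length at most one is vacuously valid. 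The interchange of summations is legitimate because all the relevant series converge in the non-archimedean field $\CC_{\infty}$ and so may be freely regrouped.

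The heart of the argument is the combinatorial claim that $c(d_{1},\ldots,d_{r})=0$ for every tuple. Setting $J_{>}:=\{1\le j\le r-1\mid d_{j}>d_{j+1}\}$ and $J_{\le}:=\{1,\ldots,r-1\}\setminus J_{>}$, the constraints on $(d_{1},\ldots,d_{r})$ force $\{1,\ldots,i-1\}\subset J_{>}$ and $\{i+1,\ldots,r-1\}\subset J_{\le}$. A direct case check shows that either these inclusions are incompatible (precisely when there exist $\ell\in J_{\le}$ and $k\in J_{>}$ with $\ell<k$), making the admissible set of $i$'s empty, or else $J_{>}=\{1,\ldots,k\}$ and $J_{\le}=\{k+1,\ldots,r-1\}$ for some $0\le k\le r-1$, in which case the admissible $i$'s are exactly $\{k,k+1\}$ and the contributions $(-1)^{r-k}+(-1)^{r-k-1}$ cancel. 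The boundary situations $r=1$, $J_{>}=\emptyset$, and $J_{\le}=\emptyset$ all fit this second pattern.

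The second identity is proved by the mirror-image argument: the prefix is weakly increasing and the suffix strictly decreasing, the sign becomes $(-1)^{i}$ coming from $\sL^{\bullet,\dagger}(\fs[\,:i])$, and the analogous pair of indices $\{k,k+1\}$ again produces a cancelling sum. The main (and only) obstacle is a careful enumeration of the boundary cases in the combinatorial step; there is no substantive analytic or algebraic difficulty.
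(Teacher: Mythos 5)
Your proof is correct, but it is organized differently from the paper's. You interchange the order of summation and show that, for every fixed tuple $(d_{1},\ldots,d_{r})$, the signed count of admissible splitting points $i$ vanishes; the case analysis via $J_{>}$ and $J_{\le}$ is right (the admissible set is empty unless the tuple is ``strictly decreasing then weakly increasing,'' in which case exactly two consecutive values of $i$ contribute with opposite signs), and the free rearrangement of the series is indeed justified in the non-archimedean setting since $|S^{\bullet}_{d}(s)|_{\infty}\to 0$ as $d\to\infty$. The paper instead proves (only) the second identity by a telescoping argument: it splits the product $\sL^{\bullet,\dagger}(\fs[\,:i])\,\sL^{\bullet}(\fs[i+1:\,])$ according to whether $d_{i}>d_{i+1}$ or $d_{i}\le d_{i+1}$, obtaining a recursion between consecutive ``mixed'' sums, and then sums over $i$ so that the intermediate terms cancel; this is the same inclusion--exclusion as yours but localized at the interface $i$, which lets it avoid both the interchange-of-summation step and the global combinatorial case check. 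Your version makes the cancellation mechanism explicit tuple by tuple and treats both identities and both choices of $\bullet$ uniformly, at the cost of a slightly more delicate enumeration of boundary cases; the paper's version is shorter and needs no rearrangement lemma. Both are complete proofs.
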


\begin{proof}
This proposition was stated in Lemma 4.1 of \cite{GND23}, and a proof of the $v$-adic analogue of the first equation for CMPLs was given in Lemma 4.2.1 of \cite{CM19}.
For analogous results in characteristic zero, see also \cite[Proposition 6]{IKOO11}, \cite[Theorem 2.13]{SS17}, and \cite[Theorem 3]{Zl05}.

Here, we only give a proof of the second equation.
For each $s \in \ZZ_{\ge 1}$ and $d \in \ZZ_{\ge 0}$, we set
\begin{align*}
    S^{\zeta}_{d}(s) \coloneqq S_{d}(s) \ \ \textrm{and} \ \ S^{\Li}_{d}(s) \coloneqq \dfrac{1}{L_{d}^{s}}.
\end{align*}
Let $\fs = (s_{1}, \ldots, s_{r}) \in \cI \setminus \{ \emptyset \}$.
Then for each $1 \leq i \leq r - 1$,
\begin{align*}
    (- 1)^{i} \sum_{\substack{0 \leq d_{1} \leq \cdots \leq d_{i} \\ d_{i} > \cdots > d_{r} \ge 0}} S^{\bullet}_{d_{1}}(s_{1}) \cdots S^{\bullet}_{d_{r}}(s_{r})
    &= \sL^{\bullet, \dagger}(\fs[\,:i]) \sL^{\bullet}(\fs[i + 1:\,]) \\
    &\phantom{=} \; + (- 1)^{i + 1} \sum_{\substack{0 \leq d_{1} \leq \cdots \leq d_{i + 1} \\ d_{i + 1} > \cdots > d_{r} \ge 0}} S^{\bullet}_{d_{1}}(s_{1}) \cdots S^{\bullet}_{d_{r}}(s_{r}).
\end{align*}
Therefore, we have
\begin{align*}
    - \sL^{\bullet}(\fs)
    &= (- 1)^{1} \sum_{\substack{0 \leq d_{1} \\ d_{1} > \cdots > d_{r} \ge 0}} S^{\bullet}_{d_{1}}(s_{1}) \cdots S^{\bullet}_{d_{r}}(s_{r}) \\
    &= \sum_{i = 1}^{r - 1} \sL^{\bullet, \dagger}(\fs[\,:i]) \sL^{\bullet}(\fs[i + 1:\,]) + (- 1)^{r} \sum_{\substack{0 \leq d_{1} \leq \cdots \leq d_{r} \\ d_{r} \ge 0}} S^{\bullet}_{d_{1}}(s_{1}) \cdots S^{\bullet}_{d_{r}}(s_{r}) \\
    &= \sum_{i = 1}^{r} \sL^{\bullet, \dagger}(\fs[\,:i]) \sL^{\bullet}(\fs[i + 1:\,]).
\end{align*}
\end{proof}

\begin{corollary} \label{corollary-dagger-in-Z}
For each $P \in \fh^{1}_{R}$, we have $\zetad_{A}(P), \Lid_{P}(\bone) \in \cZ_{R}$.
\end{corollary}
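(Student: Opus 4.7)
The plan is to reduce the corollary to the single-index case using $R$-linearity, and then induct on depth using the second identity of Proposition \ref{proposition-prod-sum}.

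First, since $\sLzd$ and $\sLLd$ are by definition $R$-linear maps on $\fh^{1}_{R}$, and since $\cZ_{R}$ is an $R$-module, it suffices to prove that $\zetad_{A}(\fs) \in \cZ_{R}$ and $\Lid_{\fs}(\bone) \in \cZ_{R}$ for every single index $\fs \in \cI$. Equivalently, fixing $\bullet \in \{\zeta, \Li\}$, I would show $\sL^{\bullet, \dagger}(\fs) \in \cZ_{R}$ for all $\fs \in \cI$ by induction on $\dep(\fs)$.

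For the base case $\dep(\fs) = 0$, we have $\fs = \emptyset$ and $\sL^{\bullet, \dagger}(\emptyset) = 1 = \sL^{\bullet}(\emptyset) \in \cZ_{R}$. For the inductive step, suppose $\fs \in \cI \setminus \{\emptyset\}$ and the claim holds for all indices of strictly smaller depth. Isolating the $i = \dep(\fs)$ term (which equals $\sL^{\bullet,\dagger}(\fs) \cdot \sL^{\bullet}(\emptyset) = \sL^{\bullet,\dagger}(\fs)$) in the second identity of Proposition \ref{proposition-prod-sum} yields
\begin{align*}
    \sL^{\bullet, \dagger}(\fs) = - \sum_{i = 0}^{\dep(\fs) - 1} \sL^{\bullet, \dagger}(\fs[\,:i]) \, \sL^{\bullet}(\fs[i + 1:\,]).
\end{align*}
For each $0 \le i \le \dep(\fs) - 1$, the factor $\sL^{\bullet, \dagger}(\fs[\,:i])$ lies in $\cZ_{R}$ by the inductive hypothesis, while $\sL^{\bullet}(\fs[i + 1:\,])$ lies in $\cZ_{R}$ by the results of Chang--Chen--Mishiba and Im--Kim--Le--Ngo Dac--Pham recalled in Section \ref{section-positive-characteristic} (specifically, that $\Li_{\fs}(\bone) \in \cZ_{R}$ and $\zeta_{A}(\fs) \in \cZ_{R}$ for all $\fs \in \cI$). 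Since $\cZ_{R}$ is an $R$-algebra, each product lies in $\cZ_{R}$, and therefore so does the finite $R$-linear combination giving $\sL^{\bullet, \dagger}(\fs)$.

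There is no real obstacle here; the content of the corollary is entirely packaged into Proposition \ref{proposition-prod-sum}. The only thing worth double-checking is the extremal term in the summation (namely that $\fs[\,:\dep(\fs)] = \fs$ and $\fs[\dep(\fs) + 1:\,] = \emptyset$), which makes the isolation of $\sL^{\bullet,\dagger}(\fs)$ on the left-hand side unambiguous and guarantees that every remaining term has a dagger factor of depth strictly less than $\dep(\fs)$.
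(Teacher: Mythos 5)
Your proof is correct and follows essentially the same route as the paper: reduce to a single index by $R$-linearity, then induct on depth using the second identity of Proposition \ref{proposition-prod-sum} together with the facts that $\sL^{\bullet}(\fs) \in \cZ_{R}$ and that $\cZ_{R}$ is closed under products. The only cosmetic difference is that you start the induction at depth zero while the paper notes the depth-one case $\sL^{\bullet,\dagger}(n) = -\sL^{\bullet}(n)$ explicitly; both are fine.
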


\begin{proof}
Let $\bullet \in \{ \zeta, \Li \}$.
It is enough to show that $\sL^{\bullet, \dagger}(\fs) \in \cZ_{R}$ for all $\fs \in \cI$.
Since $\sL^{\bullet, \dagger}(n) = - \sL^{\bullet}(n) \in \cZ_{R}$ for each $n \in \ZZ_{\ge 1}$ and $\cZ_{R}$ is closed under product, the claim follows from Proposition \ref{proposition-prod-sum} by induction on depth.
\end{proof}

\subsection{Relations among the special values of CMDPLs}

We show that the values $\Lid_{\fs}(\bone)$ satisfy the harmonic product formula.

\begin{lemma}
For each $\fs = (s_{1}, \ldots, s_{r}), \fn = (n_{1}, \ldots, n_{\ell}) \in \cI \setminus \{ \emptyset \}$, we have
\begin{align*}
    \fs * \fn = (\fs_{+} * \fn, s_{r}) + (\fs * \fn_{+}, n_{\ell}) + (\fs_{+} * \fn_{+}, s_{r} + n_{\ell}).
\end{align*}
\end{lemma}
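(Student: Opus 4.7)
The plan is to prove this identity by induction on the total depth $N := r + \ell$, using only the defining left-recursion of the harmonic product $*$ together with the conventions $\emptyset * \eta = \eta * \emptyset = \eta$.

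For the base case $N = 2$, i.e., $r = \ell = 1$, write $\fs = (s_{1})$ and $\fn = (n_{1})$. The left-recursion gives $(s_{1}) * (n_{1}) = (s_{1}, n_{1}) + (n_{1}, s_{1}) + (s_{1} + n_{1})$. On the other side, using $\fs_{+} = \fn_{+} = \emptyset$ and the convention $\emptyset * \eta = \eta$, the right-hand side collapses to $(n_{1}, s_{1}) + (s_{1}, n_{1}) + (s_{1} + n_{1})$, which matches.

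For the inductive step, assume the identity for all pairs of total depth $< N$. Expand $\fs * \fn$ by the defining left-recursion into three terms $(s_{1}, \fs_{-} * \fn), (n_{1}, \fs * \fn_{-}), (s_{1} + n_{1}, \fs_{-} * \fn_{-})$, each containing a harmonic product of total depth $N - 1$ or $N - 2$; apply the inductive hypothesis to each inner product to rewrite them via their own right-recursion. Meanwhile, expand each of the three terms on the right-hand side of the lemma by the same left-recursion on the harmonic product it contains (again of smaller total depth). Using the obvious relations $(\fs_{-})_{+} = (\fs_{+})_{-}$ and analogously for $\fn$, the two resulting expansions are built from the same elementary constituents, and a direct matching of terms (grouped according to which end-entries are independent and which are merged) shows that they coincide.

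The only technical subtlety is handling the edge cases $r = 1$ or $\ell = 1$, in which $\fs_{+}$ or $\fn_{+}$ equals $\emptyset$ and the corresponding term on the right-hand side must be interpreted via $\emptyset * \eta = \eta$; these cases reduce to low-depth identities verified alongside the base case. Conceptually, this lemma simply reflects the combinatorial description of $\fs * \fn$ as a sum over stuffle-interleavings of $\fs$ and $\fn$: the defining left-recursion classifies such interleavings according to their first coordinate, whereas the identity of the lemma classifies them according to their last coordinate, and the two enumerations are manifestly the same. The only real work in the proof is therefore the bookkeeping required to translate this symmetry into the given algebraic framework.
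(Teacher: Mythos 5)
Your proposal is correct and follows essentially the same route as the paper: induction on the total depth $\dep(\fs)+\dep(\fn)$, expanding $\fs * \fn$ by the defining left-recursion, applying the inductive hypothesis to the inner products, and regrouping the resulting terms into the left-recursion expansions of the three right-hand-side terms, with the cases $\dep(\fs)=1$ or $\dep(\fn)=1$ treated separately via the $\emptyset$ conventions. The only minor imprecision is the claim that the edge cases ``reduce to low-depth identities'': for, say, $\dep(\fn)=1$ and $\dep(\fs)\ge 2$ one still needs the inductive hypothesis applied to $\fs_{-}*\fn$, exactly as in the paper, but this does not affect the validity of the argument.
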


\begin{proof}
The lemma is proved by induction on $\dep(\fs) + \dep(\fn)$.
When $\dep(\fs) = \dep(\fn) = 1$, the lemma is immediate.
Let $\dep(\fs) + \dep(\fn) \ge 3$ and assume that the lemma holds for indices whose total depth is less than $\dep(\fs) + \dep(\fn)$.
When $\dep(\fn) = 1$, we have $\dep(\fs) \ge 2$ and
\begin{align*}
    \fs * (n_{1}) &= (s_{1}, \fs_{-} * (n_{1})) + (n_{1}, \fs) + (s_{1} + n_{1}, \fs_{-}) \\
    &= (s_{1}, \fs_{\pm} * (n_{1}), s_{r}) + (s_{1}, \fs_{-}, n_{1}) + (s_{1}, \fs_{\pm}, s_{r} + n_{1}) + (n_{1}, \fs) + (s_{1} + n_{1}, \fs_{-}) \\
    &= (\fs_{+} * (n_{1}), s_{r}) + (\fs, n_{1}) + (\fs_{+}, s_{r} + n_{1}),
\end{align*}
where $\fs_{\pm} \coloneqq (\fs_{+})_{-} = (\fs_{-})_{+} = (s_{2}, \ldots, s_{r - 1})$.
The case $\dep(\fs) = 1$ is proved similarly.
Finally, when $\dep(\fs), \dep(\fn) \ge 2$, we have
\begin{align*}
    \fs * \fn
    &= (s_{1}, \fs_{-} * \fn) + (n_{1}, \fs * \fn_{-}) + (s_{1} + n_{1}, \fs_{-} * \fn_{-}) \\
    &= (s_{1}, \fs_{\pm} * \fn, s_{r})
    + (s_{1}, \fs_{-} * \fn_{+}, n_{\ell}) + (s_{1}, \fs_{\pm} * \fn_{+}, s_{r} + n_{\ell}) \\
    &\phantom{=} \; + (n_{1}, \fs_{+} * \fn_{-}, s_{r}) + (n_{1}, \fs * \fn_{\pm}, n_{\ell}) + (n_{1}, \fs_{+} * \fn_{\pm}, s_{r} + n_{\ell}) \\
    &\phantom{=} \; + (s_{1} + n_{1}, \fs_{\pm} * \fn_{-}, s_{r})
    + (s_{1} + n_{1}, \fs_{-} * \fn_{\pm}, n_{\ell}) + (s_{1} + n_{1}, \fs_{\pm} * \fn_{\pm}, s_{r} + n_{\ell}) \\
    &= (\fs_{+} * \fn, s_{r}) + (\fs * \fn_{+}, n_{\ell}) + (\fs_{+} * \fn_{+}, s_{r} + n_{\ell}).
\end{align*}
\end{proof}

\begin{proposition} \label{proposition-Lid-prod}
The values $\Lid_{P}(\bone)$ $(P \in \fh^{1}_{R})$ satisfy the harmonic product formula, that is, for each $P, Q \in \fh^{1}_{R}$, we have $\Lid_{P}(\bone) \Lid_{Q}(\bone) = \Lid_{P * Q}(\bone)$.
\end{proposition}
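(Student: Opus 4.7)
The plan is to prove a stronger partial-sum version of the identity and then pass to the limit. For each $\fs = (s_{1}, \ldots, s_{r}) \in \cI$ and $D \in \ZZ_{\ge 0}$, introduce the truncation
\begin{align*}
    U_{\fs}(D) \coloneqq (- 1)^{\dep(\fs)} \sum_{0 \leq d_{1} \leq \cdots \leq d_{r} \leq D} \dfrac{1}{L_{d_{1}}^{s_{1}} \cdots L_{d_{r}}^{s_{r}}} \in k,
\end{align*}
with $U_{\emptyset}(D) \coloneqq 1$, and extend $R$-linearly to $U_{P}(D)$ for $P \in \fh^{1}_{R}$. Since $|L_{d}|_{\infty}$ grows doubly exponentially, $\lim_{D \to \infty} U_{P}(D) = \cLd(P)$ in $k_{\infty}$, so it suffices to establish the sharpened identity $U_{P}(D) U_{Q}(D) = U_{P * Q}(D)$ for every $D \geq 0$ and then let $D \to \infty$.

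By $R$-bilinearity this reduces to showing $U_{\fs}(D) U_{\fn}(D) = U_{\fs * \fn}(D)$ for $\fs, \fn \in \cI$, and I would proceed by induction on $\dep(\fs) + \dep(\fn)$. The base case (one of $\fs, \fn$ empty) is immediate. For the inductive step, combine the preceding lemma
\begin{align*}
    \fs * \fn = (\fs_{+} * \fn, s_{r}) + (\fs * \fn_{+}, n_{\ell}) + (\fs_{+} * \fn_{+}, s_{r} + n_{\ell})
\end{align*}
with the elementary one-step recursion
\begin{align*}
    U_{(\fa, c)}(D) = - \sum_{d = 0}^{D} \dfrac{U_{\fa}(d)}{L_{d}^{c}}, \qquad \text{equivalently} \qquad U_{\fs}(D) - U_{\fs}(D - 1) = - \dfrac{U_{\fs_{+}}(D)}{L_{D}^{s_{r}}}
\end{align*}
(using the convention $U_{\fs}(- 1) \coloneqq 0$), which comes from isolating the summand with $d_{r} = D$. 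Applying the inductive hypothesis to the three products $\fs_{+} * \fn$, $\fs * \fn_{+}$, $\fs_{+} * \fn_{+}$ — each of total depth strictly less than $\dep(\fs) + \dep(\fn)$ — transforms the target identity into
\begin{align*}
    U_{\fs}(D) U_{\fn}(D) = - \sum_{d = 0}^{D} \left( \dfrac{U_{\fs_{+}}(d) U_{\fn}(d)}{L_{d}^{s_{r}}} + \dfrac{U_{\fs}(d) U_{\fn_{+}}(d)}{L_{d}^{n_{\ell}}} + \dfrac{U_{\fs_{+}}(d) U_{\fn_{+}}(d)}{L_{d}^{s_{r} + n_{\ell}}} \right).
\end{align*}

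Writing $a_{d} \coloneqq U_{\fs}(d)$ and $b_{d} \coloneqq U_{\fn}(d)$, the one-step recursion rewrites the right-hand side as $\sum_{d = 0}^{D} \bigl[ (a_{d} - a_{d - 1}) b_{d} + a_{d} (b_{d} - b_{d - 1}) - (a_{d} - a_{d - 1})(b_{d} - b_{d - 1}) \bigr]$, and the one-line algebraic identity $(a - a') b + a (b - b') - (a - a')(b - b') = a b - a' b'$ collapses the $d$th term to $a_{d} b_{d} - a_{d - 1} b_{d - 1}$. The sum then telescopes to $a_{D} b_{D} = U_{\fs}(D) U_{\fn}(D)$, completing the induction. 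I do not anticipate a genuine obstacle here; the one subtle choice is deciding at the outset to work with truncated partial sums $U_{\fs}(D)$ rather than with $\Lid_{\fs}(\bone)$ directly, since the three-term recursion telescopes only after the two factors are coupled through a shared cutoff $d$. Conceptually, the three summands are the algebraic shadow of splitting the double sum defining $\Lid_{\fs}(\bone) \Lid_{\fn}(\bone)$ according as $d_{r} > e_{\ell}$, $d_{r} < e_{\ell}$, or $d_{r} = e_{\ell}$, exactly matching the three terms in the last-entry recursion for $*$.
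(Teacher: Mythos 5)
Your proof is correct and is essentially the paper's own argument: the truncation $U_{\fs}(D)$ is exactly the paper's $\cLd_{\leq D}(\fs)$, and the paper likewise proves $\cLd_{\leq D}(\fs)\cLd_{\leq D}(\fn) = \cLd_{\leq D}(\fs * \fn)$ by induction on $\dep(\fs)+\dep(\fn)$ using the same last-entry lemma for $*$ before letting $D \to \infty$. The only cosmetic difference is that the paper opens by splitting the double sum according to $d_{r} > e_{\ell}$, $d_{r} < e_{\ell}$, $d_{r} = e_{\ell}$ and then recombines, whereas you run the equivalent telescoping identity in the other direction.
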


\begin{proof}
For each $D \in \ZZ_{\ge 0}$, let $\cLd_{\leq D} \colon \fh^{1}_{R} \to \cZ_{R}$ be the $R$-linear map such that
\begin{align*}
    \cLd_{\leq D}(\fa) = (- 1)^{\dep(\fa)} \sum_{0 \leq d_{1} \leq \cdots \leq d_{m} \leq D} \dfrac{1}{L_{d_{1}}^{a_{1}} \cdots L_{d_{m}}^{a_{m}}}.
\end{align*}
for all $\fa = (a_{1}, \ldots, a_{m}) \in \cI$.
We claim that $\cLd_{\leq D}(\fs) \cLd_{\leq D}(\fn) = \cLd_{\leq D}(\fs * \fn)$ for all $D \in \ZZ_{\ge 0}$ and all $\fs = (s_{1}, \ldots, s_{r}), \fn = (n_{1}, \ldots, n_{\ell}) \in \cI$.
We show the claim by induction on $\dep(\fs) + \dep(\fn)$.
When $\fs = \emptyset$ or $\fn = \emptyset$, the claim is clear.
When $\fs, \fn \neq \emptyset$, we assume that the claim is valid for indices whose total depth is less than $\dep(\fs) + \dep(\fn)$.
Then
\begin{align*}
    \cLd_{\leq D}(\fs) \cLd_{\leq D}(\fn)
    &= \sum_{0 \leq d \leq D} \left( - \dfrac{\cLd_{\leq d}(\fs_{+}) \cLd_{\leq d}(\fn)}{L_{d}^{s_{r}}} - \dfrac{\cLd_{\leq d}(\fs) \cLd_{\leq d}(\fn_{+})}{L_{d}^{n_{\ell}}} - \dfrac{\cLd_{\leq d}(\fs_{+}) \cLd_{\leq d}(\fn_{+})}{L_{d}^{s_{r} + n_{\ell}}} \right) \\
    &= \sum_{0 \leq d \leq D} \left( - \dfrac{\cLd_{\leq d}(\fs_{+} * \fn)}{L_{d}^{s_{r}}} - \dfrac{\cLd_{\leq d}(\fs * \fn_{+})}{L_{d}^{n_{\ell}}} - \dfrac{\cLd_{\leq d}(\fs_{+} * \fn_{+})}{L_{d}^{s_{r} + n_{\ell}}} \right) \\
    &= \cLd_{\leq D}(\fs_{+} * \fn, s_{r}) + \cLd_{\leq D}(\fs * \fn_{+}, n_{\ell}) + \cLd_{\leq D}(\fs_{+} * \fn_{+}, s_{r} + n_{\ell}) \\
    &=\cLd_{\leq D}(\fs * \fn).
\end{align*}
By the claim, we have $\cLd_{\leq D}(P) \cLd_{\leq D}(Q) = \cLd_{\leq D}(P * Q)$ for all $D \in \ZZ_{\ge 0}$ and all $P, Q \in \fh^{1}_{R}$.
Taking the limit as $D \to \infty$, we obtain the desired result.
\end{proof}

The following lemma is a key ingredient in the proof of Theorem \ref{theorem-main}:

\begin{lemma} \label{lemma-key}
Let $\fs = (s_{1}, \ldots, s_{r}), \fn = (n_{1}, \ldots, n_{\ell}) \in \cI$ and $m \in \ZZ_{\ge 0}$ be such that $r + \ell + m \ge 1$.
For each $c_{j} \in \ZZ_{\ge 1}$ $(1 \leq j \leq m)$, we have
\begin{align*}
    &\cLd(\fs, (\alpha_{c_{1}; q - 1}^{\Li} \circ \cdots \circ \alpha_{c_{m}; q - 1}^{\Li})(\fn)) \\
    &\equiv - \sum_{1 \leq i \leq r} \cL(\fs[\,:i]) \cLd(\fs[i + 1:\,], (\alpha_{c_{1}; q - 1}^{\Li} \circ \cdots \circ \alpha_{c_{m}; q - 1}^{\Li})(\fn)) \\
    &\phantom{=} \; - \sum_{1 \leq i \leq m} \cL(\fs, (\alpha_{c_{1}; q - 1}^{\Li} \circ \cdots \circ \alpha_{c_{i}; q - 1}^{\Li})(\emptyset)) \cLd((\alpha_{c_{i + 1}; q - 1}^{\Li} \circ \cdots \circ \alpha_{c_{m}; q - 1}^{\Li})(\fn)) \\
    &\phantom{=} \; - \sum_{1 \leq i \leq \ell} \cL(\fs, (\alpha_{c_{1}; q - 1}^{\Li} \circ \cdots \circ \alpha_{c_{m}; q - 1}^{\Li})(\fn[\,:i])) \cLd(\fn[i + 1:\,]) \bmod \zeta_{A}(q - 1) \cZ_{R},
\end{align*}
with the convention that any empty composition is taken to be the identity map on $\fh^{1}_{R}$ (in particular, when $m = 0$ or when $i = m$).
\end{lemma}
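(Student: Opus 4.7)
The plan is to induct on $m$. The base case $m = 0$ is immediate: here $\fa = \fn$, the second sum in the statement is empty, and the claim reduces to the first equation of Proposition \ref{proposition-prod-sum} applied to the concatenation $(\fs, \fn)$.

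For the inductive step, abbreviate $T_i := \alpha_{c_i; q-1}^{\Li}$ and set $\fa' := T_2 \cdots T_m(\fn)$, so that $\fa = T_1(\fa') = (c_1, (q-1) * \fa')$. Extending the first equation of Proposition \ref{proposition-prod-sum} by $R$-linearity over the single-index decomposition $\fa = \sum_\fb a_\fb \fb$ and applying it to each concatenation $(\fs, \fb)$ gives
\begin{align*}
    \cLd(\fs, \fa) = -\sum_{i=1}^{r} \cL(\fs[:i]) \cLd(\fs[i+1:], \fa) - X,
\end{align*}
where $X := \sum_\fb a_\fb \sum_{k=1}^{\dep(\fb)} \cL(\fs, \fb[:k]) \cLd(\fb[k+1:])$ gathers all ``cut-inside-$\fa$'' contributions. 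The first sum already matches the first sum in the target, so it suffices to identify $-X$ with the remaining two sums modulo $\zeta_{A}(q - 1) \cZ_R$.

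To analyze $X$, observe that each $\fb \in \Supp(\fa)$ has the form $(c_1, \cdot)$ with tail in $\Supp((q - 1) * \fa')$, and each such tail arises from some $\fc \in \Supp(\fa')$ by inserting $(q - 1)$ at a position or merging it with an entry. Classifying each pair $(\fb, k)$ by the position of the resulting $(q - 1)$-derived entry in $\fb$ relative to the cut at $k$, the contributions where the cut lies strictly before that entry --- together with the $k = 1$ case --- sum (over all positions of the $(q - 1)$ within the suffix) into expressions of the form $\cL(\cdots) \cdot \cLd((q - 1) * (\text{suffix})) = \cL(\cdots) \cdot \cLd(q - 1) \cdot \cLd(\text{suffix})$ by Proposition \ref{proposition-Lid-prod}; these vanish modulo $\zeta_{A}(q - 1) \cZ_R$ since $\cLd(q - 1) = -\zeta_{A}(q - 1)$. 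The remaining contributions, where the cut lies at or strictly after the $(q - 1)$-derived entry, reassemble --- using the defining expansion of $(q - 1) * \fc[:j]$ as a sum of inserts and merges --- into
\begin{align*}
    X \equiv \sum_\fc a_\fc \sum_{j=0}^{\dep(\fc)} \cL\bigl(\fs, T_1(\fc[:j])\bigr) \cLd(\fc[j+1:]) \pmod{\zeta_{A}(q - 1) \cZ_R},
\end{align*}
where $\fa' = \sum_\fc a_\fc \fc$ is the single-index expansion.

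Finally, one matches this sum to the target by iterating the same analysis on the layers of $\fa' = T_2(T_3 \cdots T_m(\fn))$. The $j = 0$ term contributes $\cL(\fs, T_1(\emptyset)) \cLd(\fa') = \cL(\fs, T_1(\emptyset)) \cLd(T_2 \cdots T_m(\fn))$, yielding the $i = 1$ summand of the target's second sum. For $j \geq 1$, substituting $\fa' = T_2(\fa'')$ with $\fa'' = T_3 \cdots T_m(\fn)$ (so each $\fc \in \Supp(\fa')$ has the form $(c_2, \cdot)$) and re-running the classification with respect to the $(q - 1)$-derived entries introduced by $T_2$: the $j = 1$ contribution collapses via Proposition \ref{proposition-Lid-prod} to $\cL(\fs, T_1(c_2)) \cdot \cLd(q - 1) \cdot \cLd(\fa'') \equiv 0$, the split at the ``$c_2, q - 1$'' position produces the $i = 2$ summand $\cL(\fs, T_1 T_2(\emptyset)) \cLd(T_3 \cdots T_m(\fn))$, and the remaining terms reorganize into a one-layer-deeper analogue of the displayed formula. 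Continuing through all $m$ operators $T_i$ in turn and finally distributing the splits of the $\fn$-tail across positions $1 \leq i \leq \ell$ yields precisely the second and third sums of the claim. The main obstacle is this combinatorial bookkeeping: one must verify that every cut lying inside a $(q - 1) * (\cdots)$ factor contributes only a $\cLd(q - 1)$-multiple (and hence vanishes modulo $\zeta_{A}(q - 1) \cZ_R$), while the structural cuts at the $c_i$ and $n_i$ positions contribute exactly the claimed terms; this is arranged cleanly either by a nested induction on the number of remaining layers or by invoking the inductive hypothesis for $m - 1$ on the one-layer-deeper expression after the first peeling.
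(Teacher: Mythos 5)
Your overall strategy and the two key mechanisms are the right ones: induction on $m$, Proposition \ref{proposition-prod-sum} to produce the cut decomposition of $\cLd(\fs,\fa)$, and the observation that any cut landing strictly before the $(q-1)$-derived entry leaves a factor $\cLd((q-1)*(\text{suffix}))=\cLd(q-1)\,\cLd(\text{suffix})\equiv 0$ by Proposition \ref{proposition-Lid-prod}. Your base case and your intermediate identity $X\equiv\sum_{\fc}a_{\fc}\sum_{j}\cL(\fs,T_1(\fc[\,:j]))\,\cLd(\fc[j+1:\,])$ are both correct. The gap is in how you close the induction. Because you peel the \emph{outermost} operator $T_1=\alpha_{c_1;q-1}^{\Li}$, what remains after the first peeling is not an expression of the form $\cLd(\fs',(\alpha_{c_2;q-1}^{\Li}\circ\cdots\circ\alpha_{c_m;q-1}^{\Li})(\fn))$; it is the weighted cut-sum $\sum_{\fc}a_{\fc}\sum_{j\ge 1}\cL(\fs,T_1(\fc[\,:j]))\,\cLd(\fc[j+1:\,])$ taken over the basis expansion of $T_2\cdots T_m(\fn)$, with a nontrivial functional $\cL(\fs,T_1(-))$ attached to the prefixes. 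The inductive hypothesis for $m-1$, as the lemma is stated, says nothing about such a sum, so ``invoking the inductive hypothesis for $m-1$ on the one-layer-deeper expression'' does not go through. The alternative you offer, a nested induction on the number of remaining layers, would work, but it requires formulating and proving a strengthened auxiliary claim (roughly: for every $k$, the cut-sum of $T_{k+1}\cdots T_m(\fn)$ weighted by $\cL(\fs,T_1\cdots T_k(-))$ on prefixes reduces, modulo $\zeta_{A}(q-1)\cZ_{R}$, to the structural terms), and you have neither written that statement down nor verified its own induction step; as submitted the argument does not close.

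The paper avoids this entirely by peeling the \emph{innermost} operator instead: it expands $\alpha_{c_m;q-1}^{\Li}(\fn)=\sum_{j}\fb_j+\sum_{j}\fb'_j$ into the merge terms $\fb_j=(c_m,\fn[\,:j-1],q-1+n_j,\fn[j+1:\,])$ and the insertion terms $\fb'_j=(c_m,\fn[\,:j-1],q-1,\fn[j:\,])$, applies the inductive hypothesis \emph{verbatim} (with $m-1$ operators and new tail index $\fb_j$ or $\fb'_j$), and then only has to regroup finite sums: the cuts with $i\le j$ reassemble over $j$ into $\cLd((q-1)*\fn[i:\,])\equiv 0$, while the cuts with $i>j$ reassemble into $\alpha_{c_m;q-1}^{\Li}(\fn[\,:i-1])$ in the $\cL$-argument. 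If you reverse your peeling order in this way, your argument becomes the paper's proof and closes with no auxiliary statement; otherwise you must supply the nested induction explicitly.
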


\begin{proof}
We prove the lemma by induction on $m \in \ZZ_{\ge 0}$.
In the case $m = 0$ (and $r + \ell \ge 1$), the equation reduces to
\begin{align*}
    \cLd(\fs, \fn) = - \sum_{1 \leq i \leq r} \cL(\fs[\,:i]) \cLd(\fs[i + 1:\,], \fn) - \sum_{1 \leq i \leq \ell} \cL(\fs, \fn[\,:i]) \cLd(\fn[i + 1:\,]),
\end{align*}
and this follows directly from Proposition \ref{proposition-prod-sum}.
Let $m \ge 1$ and assume that the lemma holds for $\alpha_{c_{1}; q - 1}^{\Li} \circ \cdots \circ \alpha_{c_{m - 1}; q - 1}^{\Li}$.
We write
\begin{align*}
    \alpha_{c_{m}; q - 1}^{\Li}(\fn) = \sum_{1 \leq j \leq \ell} \fb_{j} + \sum_{1 \leq j \leq \ell + 1} \fb'_{j},
\end{align*}
with
\begin{align*}
    \fb_{j} \coloneqq (c_{m}, \fn[\,:j - 1], q - 1 + n_{j}, \fn[j + 1:\,])
    \ \ \textrm{and} \ \
    \fb'_{j} \coloneqq (c_{m}, \fn[\,:j - 1], q - 1, \fn[j:\,]).
\end{align*}
Then, by the induction hypothesis, we have
\begin{align*}
    &\cLd(\fs, (\alpha_{c_{1}; q - 1}^{\Li} \circ \cdots \circ \alpha_{c_{m}; q - 1}^{\Li})(\fn)) \\
    &= \sum_{1 \leq j \leq \ell} \cLd(\fs, (\alpha_{c_{1}; q - 1}^{\Li} \circ \cdots \circ \alpha_{c_{m - 1}; q - 1}^{\Li})(\fb_{j})) + \sum_{1 \leq j \leq \ell + 1} \cLd(\fs, (\alpha_{c_{1}; q - 1}^{\Li} \circ \cdots \circ \alpha_{c_{m - 1}; q - 1}^{\Li})(\fb'_{j})) \\
    &\equiv - \sum_{1 \leq j \leq \ell} \biggl( \sum_{1 \leq i \leq r} \cL(\fs[\,:i]) \cLd(\fs[i + 1:\,], (\alpha_{c_{1}; q - 1}^{\Li} \circ \cdots \circ \alpha_{c_{m - 1}; q - 1}^{\Li})(\fb_{j})) \\
    &\phantom{\equiv - \sum} \; + \sum_{1 \leq i \leq m - 1} \cL(\fs, (\alpha_{c_{1}; q - 1}^{\Li} \circ \cdots \circ \alpha_{c_{i}; q - 1}^{\Li})(\emptyset)) \cLd((\alpha_{c_{i + 1}; q - 1}^{\Li} \circ \cdots \circ \alpha_{c_{m - 1}; q - 1}^{\Li})(\fb_{j})) \\
    &\phantom{\equiv - \sum} \; + \sum_{1 \leq i \leq \ell + 1} \cL(\fs, (\alpha_{c_{1}; q - 1}^{\Li} \circ \cdots \circ \alpha_{c_{m - 1}; q - 1}^{\Li})(\fb_{j}[\,:i])) \cLd(\fb_{j}[i + 1:\,]) \biggr) \\
    &\phantom{=} \; - \sum_{1 \leq j \leq \ell + 1} \biggl( \sum_{1 \leq i \leq r} \cL(\fs[\,:i]) \cLd(\fs[i + 1:\,], (\alpha_{c_{1}; q - 1}^{\Li} \circ \cdots \circ \alpha_{c_{m - 1}; q - 1}^{\Li})(\fb'_{j})) \\
    &\phantom{\equiv - \sum} \; + \sum_{1 \leq i \leq m - 1} \cL(\fs, (\alpha_{c_{1}; q - 1}^{\Li} \circ \cdots \circ \alpha_{c_{i}; q - 1}^{\Li})(\emptyset)) \cLd((\alpha_{c_{i + 1}; q - 1}^{\Li} \circ \cdots \circ \alpha_{c_{m - 1}; q - 1}^{\Li})(\fb'_{j})) \\
    &\phantom{\equiv - \sum} \; + \sum_{1 \leq i \leq \ell + 2} \cL(\fs, (\alpha_{c_{1}; q - 1}^{\Li} \circ \cdots \circ \alpha_{c_{m - 1}; q - 1}^{\Li})(\fb'_{j}[\,:i])) \cLd(\fb'_{j}[i + 1:\,]) \biggr) \bmod \zeta_{A}(q - 1) \cZ_{R} \\
    &= - \sum_{1 \leq i \leq r} \cL(\fs[\,:i]) \cLd(\fs[i + 1:\,], (\alpha_{c_{1}; q - 1}^{\Li} \circ \cdots \circ \alpha_{c_{m}; q - 1}^{\Li})(\fn)) \\
    &\phantom{=} \; - \sum_{1 \leq i \leq m - 1} \cL(\fs, (\alpha_{c_{1}; q - 1}^{\Li} \circ \cdots \circ \alpha_{c_{i}; q - 1}^{\Li})(\emptyset)) \cLd((\alpha_{c_{i + 1}; q - 1}^{\Li} \circ \cdots \circ \alpha_{c_{m}; q - 1}^{\Li})(\fn)) \\
    &\phantom{=} \; - \sum_{1 \leq i \leq j \leq \ell} \cL(\fs, (\alpha_{c_{1}; q - 1}^{\Li} \circ \cdots \circ \alpha_{c_{m - 1}; q - 1}^{\Li})(\fb_{j}[\,:i])) \cLd(\fb_{j}[i + 1:\,]) \\
    &\phantom{=} \; - \sum_{1 \leq i \leq j \leq \ell + 1} \cL(\fs, (\alpha_{c_{1}; q - 1}^{\Li} \circ \cdots \circ \alpha_{c_{m - 1}; q - 1}^{\Li})(\fb'_{j}[\,:i])) \cLd(\fb'_{j}[i + 1:\,]) \\
    &\phantom{=} \; - \sum_{1 \leq j < i \leq \ell + 1} \cL(\fs, (\alpha_{c_{1}; q - 1}^{\Li} \circ \cdots \circ \alpha_{c_{m - 1}; q - 1}^{\Li})(\fb_{j}[\,:i])) \cLd(\fb_{j}[i + 1:\,]) \\
    &\phantom{=} \; - \sum_{1 \leq j < i \leq \ell + 2} \cL(\fs, (\alpha_{c_{1}; q - 1}^{\Li} \circ \cdots \circ \alpha_{c_{m - 1}; q - 1}^{\Li})(\fb'_{j}[\,:i])) \cLd(\fb'_{j}[i + 1:\,]) \\
    &= - \sum_{1 \leq i \leq r} \cL(\fs[\,:i]) \cLd(\fs[i + 1:\,], (\alpha_{c_{1}; q - 1}^{\Li} \circ \cdots \circ \alpha_{c_{m}; q - 1}^{\Li})(\fn)) \\
    &\phantom{=} \; - \sum_{1 \leq i \leq m - 1} \cL(\fs, (\alpha_{c_{1}; q - 1}^{\Li} \circ \cdots \circ \alpha_{c_{i}; q - 1}^{\Li})(\emptyset)) \cLd((\alpha_{c_{i + 1}; q - 1}^{\Li} \circ \cdots \circ \alpha_{c_{m}; q - 1}^{\Li})(\fn)) \\
    &\phantom{=} \; - \sum_{1 \leq i \leq \ell + 1} \cL(\fs, (\alpha_{c_{1}; q - 1}^{\Li} \circ \cdots \circ \alpha_{c_{m - 1}; q - 1}^{\Li})(c_{m}, \fn[\,:i - 1])) \cLd((q - 1) * \fn[i:\,]) \\
    &\phantom{=} \; - \sum_{1 \leq i \leq \ell + 1} \cL(\fs, (\alpha_{c_{1}; q - 1}^{\Li} \circ \cdots \circ \alpha_{c_{m}; q - 1}^{\Li})(\fn[\,:i - 1])) \cLd(\fn[i:\,]).
\end{align*}
Since $\cLd((q - 1) * \fn[i:\,]) = \cLd(q - 1) \cLd(\fn[i:\,]) \equiv 0 \bmod \zeta_{A}(q - 1) \cZ_{R}$, we have the desired result.
\end{proof}

\subsection{Proof of the main theorem}

In this section, we provide the proof of Theorem \ref{theorem-main}.
We note that $\Li_{\fs}(\bone)$ and $\Lid_{\fs}(\bone)$ satisfy the harmonic product formula by \cite[(5.2.1)]{C14} and Proposition \ref{proposition-Lid-prod}.
Therefore, by Proposition \ref{proposition-A-span}, to prove the existence of the $R$-algebra homomorphism $\iota$, it suffices to show that the congruence
\begin{align*}
    \cLd(\sA^{\Li}(\fs; m; \fn)) \equiv 0 \bmod \zeta_{A}(q - 1) \cZ_{R}
\end{align*}
holds for all $\fs, \fn \in \cI$ and $m \in \ZZ_{\ge 1}$.
We put $r \coloneqq \dep(\fs)$ and $\ell \coloneqq \dep(\fn)$.
We note that
\begin{align*}
    \{ q \}^{0} \boxplus \fn = \fs[r + 1:\,] \boxplus \alpha_{1; q - 1}^{\Li, m}(\fn) = 0,
\end{align*}
which follows from the definition.
We also note that
\begin{align*}
    \fs \boxplus \alpha_{1; q - 1}^{\Li, m}(\fn) = (\fs_{+}, \alpha_{s_{r} + 1; q - 1}^{\Li}(\alpha_{1; q - 1}^{\Li, m - 1}(\fn)))
\end{align*}
when $\fs = (s_{1}, \ldots, s_{r}) \neq \emptyset$. 
By Proposition \ref{proposition-prod-sum} and Lemma \ref{lemma-key}, we have
\begin{align*}
    &\cLd(\sA^{\Li}(\fs; m; \fn)) \\
    &= \cLd(\fs, \{ q \}^{m}, \fn) + \cLd(\fs, \{ q \}^{m} \boxplus \fn) - L_{1}^{m} \cLd(\fs, \alpha_{1; q - 1}^{\Li, m}(\fn)) - L_{1}^{m} \cLd(\fs \boxplus \alpha_{1; q - 1}^{\Li, m}(\fn))\\
    &\equiv - \sum_{1 \leq i \leq r} \cL(\fs[\,:i]) \cLd(\fs[i + 1:\,], \{ q \}^{m}, \fn) - \sum_{1 \leq i \leq m} \cL(\fs, \{ q \}^{i}) \cLd(\{ q \}^{m - i}, \fn) \\
    &\phantom{=} \; - \sum_{1 \leq i \leq \ell} \cL(\fs, \{ q \}^{m}, \fn[\,:i]) \cLd(\fn[i + 1:\,]) \\
    &\phantom{=} \; - \sum_{1 \leq i \leq r} \cL(\fs[\,:i]) \cLd(\fs[i + 1:\,], \{ q \}^{m} \boxplus \fn) - \sum_{1 \leq i \leq m} \cL(\fs, \{ q \}^{i}) \cLd(\{ q \}^{m - i} \boxplus \fn) \\
    &\phantom{=} \; - \sum_{1 \leq i \leq \ell} \cL(\fs, \{ q \}^{m} \boxplus \fn[\,:i]) \cLd(\fn[i + 1:\,]) \\
    &\phantom{=} \; + L_{1}^{m} \sum_{1 \leq i \leq r} \cL(\fs[\,:i]) \cLd(\fs[i + 1:\,], \alpha_{1; q - 1}^{\Li, m}(\fn)) + L_{1}^{m} \sum_{1 \leq i \leq m} \cL(\fs, \alpha_{1; q - 1}^{\Li, i}(\emptyset)) \cLd(\alpha_{1; q - 1}^{\Li, m - i}(\fn)) \\
    &\phantom{=} \; + L_{1}^{m} \sum_{1 \leq i \leq \ell} \cL(\fs, \alpha_{1; q - 1}^{\Li, m}(\fn[\,:i])) \cLd(\fn[i + 1:\,]) \\
    &\phantom{=} \; + L_{1}^{m} \sum_{1 \leq i \leq r} \cL(\fs[\,:i]) \cLd(\fs[i + 1:\,] \boxplus \alpha_{1; q - 1}^{\Li, m}(\fn)) + L_{1}^{m} \sum_{1 \leq i \leq m} \cL(\fs \boxplus \alpha_{1; q - 1}^{\Li, i}(\emptyset)) \cLd(\alpha_{1; q - 1}^{\Li, m - i}(\fn)) \\
    &\phantom{=} \; + L_{1}^{m} \sum_{1 \leq i \leq \ell} \cL(\fs \boxplus \alpha_{1; q - 1}^{\Li, m}(\fn[\,:i])) \cLd(\fn[i + 1:\,]) \bmod \zeta_{A}(q - 1) \cZ_{R} \\
    &= - \sum_{1 \leq i \leq r} \cL(\fs[\,:i]) \cLd(\sA^{\Li}(\fs[i + 1:\,]; m; \fn)) - \sum_{1 \leq i \leq m - 1} \cL(\fs, \{ q \}^{i}) \cLd(\sA^{\Li}(\emptyset; m - i; \fn)),
\end{align*}
where the last equality follows from $\cL(\sA^{\Li}(\fs; i; \emptyset)) = 0$, that is,
\begin{align*}
    \cL(\fs, \{ q \}^{i}) = L_{1}^{i} \cL(\fs, \alpha_{1; q - 1}^{\Li, i}(\emptyset)) + L_{1}^{i} \cL(\fs \boxplus \alpha_{1; q - 1}^{\Li, i}(\emptyset)),
\end{align*}
and $\cL(\sA^{\Li}(\fs; m; \fn[\,:i])) = 0$.
When $\dep(\fs) + m = 1$ $(r = 0$ and $m = 1)$, the right hand side is an empty sum, and hence we have $\cLd(\sA^{\Li}(\emptyset; 1; \fn)) \equiv 0 \bmod \zeta_{A}(q - 1) \cZ_{R}$.
Then we obtain $\cLd(\sA^{\Li}(\fs; m; \fn)) \equiv 0 \bmod \zeta_{A}(q - 1) \cZ_{R}$ for all $\fs$, $\fn$, and $m$ by induction on $\dep(\fs) + m$.

Next, we prove that $\iota$ is an involution.
It is enough to show that
\begin{align*}
    \iota\bigl(\cLd(\fs) \bmod \zeta_{A}(q - 1) \cZ_{R} \bigr) = \cL(\fs) \bmod \zeta_{A}(\fs) \cZ_{R}
\end{align*}
for all $\fs \in \cI$.
We prove this by induction on $r \coloneqq \dep(\fs)$.
This is clear when $r = 0$.
When $r \ge 1$, by Proposition \ref{proposition-prod-sum} and the induction hypothesis, we have
\begin{align*}
    \iota \bigl(\cLd(\fs) \bmod \zeta_{A}(q - 1) \cZ_{R}\bigr)
    &= - \sum_{1 \leq i \leq r} \iota\bigl(\cL(\fs[\,:i]) \cLd(\fs[i + 1:\,]) \bmod \zeta_{A}(q - 1) \cZ_{R}\bigr) \\
    &= - \sum_{1 \leq i \leq r} \cLd(\fs[\,:i]) \cL(\fs[i + 1:\,]) \bmod \zeta_{A}(q - 1) \cZ_{R} \\
    &= \cL(\fs) \bmod \zeta_{A}(q - 1) \cZ_{R}.
\end{align*}

Finally, we show that $\iota$ is non-trivial.
For each $w \in \ZZ_{\ge 0}$, let $\cZ_{R, w} \subset \cZ_{R}$ be the $R$-linear subspace spanned by the values $\Li_{\fs}(\bone)$ $(\fs \in \cI_{w})$.
According to Chang's decomposition theorem \cite[Theorem 2.2.1]{C14}, we have
\begin{align*}
    \cZ_{R} / \zeta_{A}(q - 1) \cZ_{R} = \biggl( \bigoplus_{0 \leq w \leq q - 2} \cZ_{R, w} \biggr) \oplus \biggl( \bigoplus_{w \ge q - 1} \cZ_{R, w} / \zeta_{A}(q - 1) \cZ_{R, w - (q - 1)} \biggr).
\end{align*}
If $p \neq 2$, then
\begin{align*}
    \Lid_{1}(\bone) - \Li_{1}(\bone) = - 2 \Li_{1}(\bone) \not\equiv 0 \bmod \zeta_{A}(q - 1) \cZ_{R}.
\end{align*}
Similarly, if $q \ge 4$, then
\begin{align*}
    \Lid_{(1, 1)}(\bone) - \Li_{(1, 1)}(\bone) = \Li_{2}(\bone) \not\equiv 0 \bmod \zeta_{A}(q - 1) \cZ_{R}.
\end{align*}
Let $q = 2$.
According to Proposition \ref{proposition-A-span} and the harmonic product $(1) * \fs$ $(\fs \in \cI_{5})$, the subspace $\cZ_{\ok, 6} / \zeta_{A}(1) \cZ_{\ok, 5} \subset \cZ_{\ok} / \zeta_{A}(1) \cZ_{\ok}$ is a three-dimensional $\ok$-vector space with basis
\begin{align*}
    \Li_{6}(\bone) \bmod \zeta_{A}(1) \cZ_{\ok}, \ \ \Li_{(5, 1)}(\bone) \bmod \zeta_{A}(1) \cZ_{\ok}, \ \ \Li_{(3, 3)}(\bone) \bmod \zeta_{A}(1) \cZ_{\ok}.
\end{align*}
In particular, $\Li_{6}(\bone) \notin \zeta_{A}(1) \cZ_{\ok}$.
Since there is a natural map $\cZ_{R} / \zeta_{A}(1) \cZ_{R} \to \cZ_{\ok} / \zeta_{A}(1) \cZ_{\ok}$, we have
\begin{align*}
    \Lid_{(3, 3)}(\bone) - \Li_{(3, 3)}(\bone) = \Li_{6}(\bone) \not\equiv 0 \bmod \zeta_{A}(1) \cZ_{R}.
\end{align*}
This completes the proof of Theorem \ref{theorem-main}.

\section{Relations among MZDVs} \label{section-MZDV}

According to Conjecture \ref{conjecture-zetad}, we expect that the values $\zetad_{A}(\fs) \bmod \zeta_{A}(q - 1) \cZ_{R}$ satisfy the $q$-shuffle product formula and the linear relations $\sA^{\zeta}(\fs; m; \fn)$.
In this section, we show that such relations hold for special cases.

\subsection{$q$-shuffle product formula among single zeta dagger values}

So far, the $q$-shuffle product formula for MZDVs has been obtained only for the depth one case.
We note that, unlike in the case of $\Lid_{\fs}(\bone)$, the $q$-shuffle product formula holds only after taking the quotient.

\begin{proposition} \label{proposition-zetad-prod}
For each $s, n \in \ZZ_{\ge 1}$, we have
\begin{align*}
    \zetad_{A}(s) \zetad_{A}(n) \equiv \zetad_{A}((s) *^{\zeta} (n)) \bmod \zeta_{A}(q - 1) \cZ_{R}.
\end{align*}
\end{proposition}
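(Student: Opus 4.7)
The plan is to expand both sides explicitly using the depth-one and depth-two relations between non-dagger and dagger values, then isolate a residual sum that lies in $\zeta_{A}(q-1)\cZ_{R}$ thanks to Carlitz's theorem.

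First, I would write out $(s) *^{\zeta} (n)$ explicitly. Since both indices have depth one, unwinding the definition of $*^{\zeta}$ gives
\begin{align*}
(s) *^{\zeta} (n) = (s,n) + (n,s) + (s+n) + \sum_{j=1}^{s+n-1} \Delta_{s,n}^{[j]} (s+n-j,\, j),
\end{align*}
so every summand has depth at most two. Next, I would use Proposition \ref{proposition-prod-sum} with $\bullet = \zeta$ at depth one and two to convert dagger values into non-dagger ones: one has $\zetad_{A}(w) = -\zeta_{A}(w)$ for $w \in \ZZ_{\ge 1}$, and $\zetad_{A}(a,b) = -\zeta_{A}(a,b) + \zeta_{A}(a)\zeta_{A}(b)$ for $a,b \in \ZZ_{\ge 1}$. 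Applying these term by term to $\zetad_{A}((s) *^{\zeta} (n))$, and collecting the $-\zeta_{A}(\,\cdot\,)$ contributions, I would recognise the collected piece as exactly $-\zeta_{A}((s) *^{\zeta} (n))$, which by the $q$-shuffle product formula \eqref{eq-product-formula} for $\bullet = \zeta$ equals $-\zeta_{A}(s)\zeta_{A}(n)$.

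After this bookkeeping, the identity one is left to verify becomes
\begin{align*}
\zetad_{A}((s) *^{\zeta} (n)) - \zetad_{A}(s)\zetad_{A}(n) = \sum_{j=1}^{s+n-1} \Delta_{s,n}^{[j]}\, \zeta_{A}(s+n-j)\zeta_{A}(j),
\end{align*}
where I have used $\zetad_{A}(s)\zetad_{A}(n) = \zeta_{A}(s)\zeta_{A}(n)$. The final step is to observe that by definition $\Delta_{s,n}^{[j]} = 0$ unless $(q-1) \mid j$, so only values $\zeta_{A}(j)$ with $j = (q-1)m$, $m \ge 1$, actually appear. For such $j$, Carlitz's theorem yields $\zeta_{A}((q-1)m) \in k^{\times}\cdot \tpi^{(q-1)m}$ and $\zeta_{A}(q-1) \in k^{\times}\cdot \tpi^{q-1}$; combining these gives
\begin{align*}
\zeta_{A}((q-1)m) \in k^{\times}\cdot \zeta_{A}(q-1)^{m} \subset \zeta_{A}(q-1)\cZ_{R},
\end{align*}
since $\cZ_{R}$ is a graded $R$-algebra and contains $\zeta_{A}(q-1)^{m-1}$. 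Thus each summand on the right-hand side lies in $\zeta_{A}(q-1)\cZ_{R}$, and the congruence follows.

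The calculation is short and the only nontrivial input is Carlitz's evaluation at weights divisible by $q-1$; the step I would be most careful about is tracking the signs when substituting $\zetad_{A}(a,b) = -\zeta_{A}(a,b) + \zeta_{A}(a)\zeta_{A}(b)$ uniformly into every term of the expansion of $\zetad_{A}((s) *^{\zeta} (n))$, so that the non-dagger $q$-shuffle identity collapses the bulk of the expression cleanly.
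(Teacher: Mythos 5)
Your proof is correct and lands on exactly the same exact identity as the paper, namely $\zetad_{A}((s) *^{\zeta} (n)) - \zetad_{A}(s)\zetad_{A}(n) = \sum_{j}\Delta_{s,n}^{[j]}\zeta_{A}(s+n-j)\zeta_{A}(j)$, which it then kills in the same way: only $j$ with $(q-1)\mid j$ survive, and for those $\zeta_{A}(j)\in k^{\times}\cdot\zeta_{A}(q-1)^{j/(q-1)}$ by Carlitz. The difference is purely in the direction of the computation. The paper starts from the double series for $\zetad_{A}(s)\zetad_{A}(n)=\zeta_{A}(s)\zeta_{A}(n)$, splits it as $\sum_{d\le e}+\sum_{e\le d}-\sum_{d=e}$, evaluates the diagonal $\sum_{d}S_{d}(s)S_{d}(n)$ via \cite[Remark 3.2]{Ch15}, and then converts the resulting non-dagger values back into dagger values with Proposition \ref{proposition-prod-sum}. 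You instead expand $\zetad_{A}((s)*^{\zeta}(n))$ term by term using the depth-one and depth-two consequences of Proposition \ref{proposition-prod-sum} (i.e.\ $\zetad_{A}(w)=-\zeta_{A}(w)$ and $\zetad_{A}(a,b)=-\zeta_{A}(a,b)+\zeta_{A}(a)\zeta_{A}(b)$) and collapse the collected non-dagger part with the product formula \eqref{eq-product-formula}. Since \eqref{eq-product-formula} for $\bullet=\zeta$ in depth one carries the same content as Chen's diagonal formula, the two arguments rest on equivalent inputs; yours avoids any series manipulation, while the paper's makes the provenance of the $\Delta$-terms (the diagonal $d=e$) visible. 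Your sign bookkeeping is right, and your treatment of the final inclusion $k^{\times}\cdot\zeta_{A}(q-1)^{m}\subset\zeta_{A}(q-1)\cZ_{R}$ is at the same level of detail as the paper's own.
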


\begin{proof}
We have
\begin{align*}
    \zetad_{A}(s) \zetad_{A}(n)
    &= \sum_{0 \leq d \leq e} S_{d}(s) S_{e}(n) + \sum_{0 \leq e \leq d} S_{d}(s) S_{e}(n) - \sum_{0 \leq d} S_{d}(s) S_{d}(n) \\
    &= \zetad_{A}(s, n) + \zetad_{A}(n, s) - \zeta_{A}(s + n) - \sum_{j = 1}^{s + n - 1} \Delta_{s, n}^{[j]} \zeta_{A}(s + n - j, j) \\
    &= \zetad_{A}(s, n) + \zetad_{A}(n, s) + \zetad_{A}(s + n) \\
    &\phantom{=} \; + \sum_{j = 1}^{s + n - 1} \Delta_{s, n}^{[j]} \left( \zetad_{A}(s + n - j, j) + \zeta_{A}(s + n - j) \zetad_{A}(j) \right) \\
    &= \zetad_{A}((s) *^{\zeta} (n)) + \sum_{j = 1}^{s + n - 1} \Delta_{s, n}^{[j]} \zeta_{A}(s + n - j) \zetad_{A}(j),
\end{align*}
where the second equality follows from \cite[Remark 3.2]{Ch15} and the third equality follows from Proposition \ref{proposition-prod-sum}.
Since $\Delta_{s, n}^{[j]} = 0$ when $(q - 1) \nmid j$, and $\zetad_{A}(j) = - \zeta_{A}(j) \in k^{\times} \cdot \zeta_{A}(q - 1)^{j / (q - 1)}$ when $(q - 1) \mid j$, we obtain the $q$-shuffle product formula in this case.
\end{proof}

\subsection{Linear relations among MZDVs}

So far, the linear relations of the form $\sA^{\zeta}(\fs; m; \fn)$ for MZDVs have been obtained only in the following cases:
\begin{proposition} \label{proposition-zeta-A}
Let $\fs, \fn \in \cI$.
We assume that
\begin{itemize}
\item $\fs = \emptyset$ or $\fs = (s_{1}, \ldots, s_{r}) \neq \emptyset$ with $s_{r} < q$,
\item $\dep(\fn) \leq 1$.
\end{itemize}
Then we have
\begin{align*}
    \zetad_{A}(\sA^{\zeta}(\fs; 1; \fn)) \equiv 0 \bmod \zeta_{A}(q - 1) \cZ_{R}.
\end{align*}
\end{proposition}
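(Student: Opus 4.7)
The plan is to parallel the proof of Theorem~\ref{theorem-main}, substituting $\zetad_A$ for $\Lid$ and the $q$-shuffle product for the harmonic product, while using Proposition~\ref{proposition-prod-sum} as the main workhorse in place of Lemma~\ref{lemma-key}. The two hypotheses are exactly what is needed to make this adaptation go through: the condition $s_r < q$ kills the term $(\fs_+, D_{s_r}(\alpha_{1; q - 1}^{\zeta}(\fn)))$ in the expansion of $\sA^\zeta(\fs; 1; \fn)$, and $\dep(\fn) \leq 1$ forces $D_q(\fn)$ to consist of depth-at-most-$2$ indices, so the $\zetad_A$-values appearing after the expansion are simple enough to control.

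I proceed by induction on $\dep(\fs)$. For the base case $\fs = \fn = \emptyset$, one has $\sA^\zeta(\emptyset; 1; \emptyset) = (q) - L_1 (1, q-1)$. Proposition~\ref{proposition-prod-sum} applied at depth~$2$ gives $\zetad_A(1, q-1) = -\zeta_A(1, q-1) - \zeta_A(1) \zetad_A(q-1)$, which is $\equiv -\zeta_A(1, q-1) \bmod \zeta_A(q-1) \cZ_R$ since $\zetad_A(q-1) = -\zeta_A(q-1)$; combining this with $\zetad_A(q) = -\zeta_A(q)$ and Proposition~\ref{proposition-A-span} yields the desired vanishing. The case $\fs = \emptyset$, $\fn = (n)$ is handled in the same spirit: expand every $\zetad_A$-term in $\sA^\zeta(\emptyset; 1; (n))$ via Proposition~\ref{proposition-prod-sum}, so that the expression reorganizes as $-\zeta_A(\sA^\zeta(\emptyset; 1; (n)))$, which is $0$ by Proposition~\ref{proposition-A-span}, plus correction products of $\zeta_A$-values. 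Each correction inherits a $\zeta_A((q-1) m)$-factor from the $D_q$- or $D_{q-1}$-operator (whose $\Delta$-coefficients vanish unless $(q - 1) \mid j$), and by Carlitz's classical formula for $\zeta_A((q-1) n)$ such a factor lies in $\zeta_A(q-1) \cZ_R$.

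For the inductive step, take $\fs = (s_1, \ldots, s_r)$ with $s_r < q$. I apply Proposition~\ref{proposition-prod-sum} to every term $\zetad_A(\fs, X)$ and $\zetad_A(\fs_+, Y)$ appearing in $\zetad_A(\sA^\zeta(\fs; 1; \fn))$, and regroup by the splitting position $i$; the expression should take the form
\[
\zetad_A(\sA^\zeta(\fs; 1; \fn)) \equiv -\zeta_A(\sA^\zeta(\fs; 1; \fn)) - \sum_{i = 1}^{r} \zeta_A(\fs[\,:i]) \, \zetad_A(\sA^\zeta(\fs[i+1:\,]; 1; \fn)) \bmod \zeta_A(q-1)\cZ_R,
\]
modulo correction terms of the same $D$-origin as in the base case. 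The first term vanishes by Proposition~\ref{proposition-A-span}; each summand in the middle is $\equiv 0 \bmod \zeta_A(q-1)\cZ_R$ by the inductive hypothesis, since $\dep(\fs[i+1:\,]) < \dep(\fs)$ and the tail entry is still $s_r < q$ (with the $i = r$ instance handled by the base case); and the corrections are absorbed by $\zeta_A(q-1) \cZ_R$ by the same Carlitz argument.

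The principal obstacle is the bookkeeping in the inductive step, i.e., verifying the displayed regrouping identity and tracking how the $D$-outputs from $D_q(\fn)$ and from the inner $(q-1) *^\zeta \fn$ inside $\alpha_{1; q-1}^\zeta(\fn)$ mesh with the Proposition~\ref{proposition-prod-sum}-splittings, so that every leftover term picks up a $\zeta_A((q-1)m)$-factor. This bookkeeping is precisely what breaks down once $\dep(\fn) \geq 2$ or $s_r \geq q$, which explains the restrictive hypotheses.
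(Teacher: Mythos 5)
Your proposal follows essentially the same route as the paper's proof: induction on $\dep(\fs)$, splitting every dagger value with Proposition~\ref{proposition-prod-sum}, regrouping the result into $\zeta_{A}(\sA^{\zeta}(\cdots))$-blocks that vanish by Proposition~\ref{proposition-A-span}, and absorbing the leftover terms because $\Delta_{s,n}^{[j]} = 0$ unless $(q-1) \mid j$, in which case $\zetad_{A}(j) = -\zeta_{A}(j) \in k^{\times} \cdot \zeta_{A}(q-1)^{j/(q-1)} \subset \zeta_{A}(q-1)\cZ_{R}$. The only slip is cosmetic: your displayed regrouping in the inductive step omits the cross block $-\zeta_{A}(\sA^{\zeta}(\fs;1;\emptyset))\,\zetad_{A}(\fn)$ arising from the splitting between the $q$-entry and $\fn$ (it is not of ``$D$-origin''), but that block also vanishes by Proposition~\ref{proposition-A-span}, so nothing is lost.
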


\begin{proof}
Let $r \coloneqq \dep(\fs)$.
By the assumption on $\fs$,
\begin{align*}
    \sA^{\zeta}(\fs[i + 1:\,]; 1; \fn) &= (\fs[i + 1:\,], q, \fn) + (\fs[i + 1:\,], \{ q \} \boxplus \fn) + (\fs[i + 1:\,], D_{q}(\fn)) \\
    &\phantom{=} \; - L_{1} (\fs[i + 1:\,], 1, (q - 1) *^{\zeta} \fn) - L_{1} (\fs[i + 1:\,] \boxplus (1), (q - 1) *^{\zeta} \fn)
\end{align*}
for each $0 \leq i \leq r$.
If $\fn = (n)$ with $n \ge 1$, then by Propositions \ref{proposition-A-span}, \ref{proposition-prod-sum}, and \ref{proposition-Lid-prod}, we have
\begin{alignat*}{2}
    &\zetad_{A}(\sA^{\zeta}(\fs; 1; n)) \\
    &= - \sum_{1 \leq i \leq r} \zeta_{A}(\fs[\,:i]) \zetad_{A}(\fs[i + 1:\,], q, n) - \zeta_{A}(\fs, q) \zetad_{A}(n) - \zeta_{A}(\fs, q, n) \\
    &\phantom{=} \; - \sum_{1 \leq i \leq r} \zeta_{A}(\fs[\,:i]) \zetad_{A}(\fs[i + 1:\,], q + n) - \zeta_{A}(\fs, q + n) \\
    &\phantom{=} \; - \sum_{1 \leq i \leq r} \zeta_{A}(\fs[\,:i]) \zetad_{A}(\fs[i + 1:\,], D_{q}(n)) - \sum_{1 \leq j < q + n} \Delta_{q, n}^{[j]} \zeta_{A}(\fs, q + n - j) \zetad_{A}(j) - \zeta_{A}(\fs, D_{q}(n)) \\
    &\phantom{=} \; + L_{1} \sum_{1 \leq i \leq r} \zeta_{A}(\fs[\,:i]) \zetad_{A}(\fs[i + 1:\,], 1, (q - 1) *^{\zeta} (n)) + L_{1} \zeta_{A}(\fs, 1) \zetad_{A}((q - 1) *^{\zeta} (n)) \\
    &\phantom{=} \; + L_{1} \zeta_{A}(\fs, 1, (q - 1) *^{\zeta} (n)) + L_{1} \zeta_{A}(\fs, 1, q - 1) \zetad_{A}(n) + L_{1} \zeta_{A}(\fs, 1, n) \zetad_{A}(q - 1) \\
    &\phantom{=} \; + L_{1} \sum_{1 \leq j < q - 1 + n} \Delta_{q - 1, n}^{[j]} \zeta_{A}(\fs, 1, q - 1 + n - j) \zetad_{A}(j) \\
    &\phantom{=} \; + L_{1} \sum_{1 \leq i \leq r} \zeta_{A}(\fs[\,:i]) \zetad_{A}(\fs[i + 1:\,] \boxplus (1), (q - 1) *^{\zeta} (n)) + L_{1} \zeta_{A}(\fs \boxplus (1)) \zetad_{A}((q - 1) *^{\zeta} (n)) \\
    &\phantom{=} \; + L_{1} \zeta_{A}(\fs \boxplus (1), (q - 1) *^{\zeta} (n)) + L_{1} \zeta_{A}(\fs \boxplus (1), q - 1) \zetad_{A}(n) + L_{1} \zeta_{A}(\fs \boxplus (1), n) \zetad_{A}(q - 1) \\
    &\phantom{=} \; + L_{1} \sum_{1 \leq j < q - 1 + n} \Delta_{q - 1, n}^{[j]} \zeta_{A}(\fs \boxplus (1), q - 1 + n - j) \zetad_{A}(j) \\
    &\equiv - \sum_{1 \leq i \leq r} \zeta_{A}(\fs[\,:i]) \zetad_{A}(\sA^{\zeta}(\fs[i + 1:\,]; 1; n)) - \zeta_{A}(\sA^{\zeta}(\fs; 1; \emptyset)) \zetad_{A}(n) - \zeta_{A}(\sA^{\zeta}(\fs; 1; n)) \\
    && \mathllap{\bmod \, \zeta_{A}(q - 1) \cZ_{R}} \\
    &= - \sum_{1 \leq i \leq r} \zeta_{A}(\fs[\,:i]) \zetad_{A}(\sA^{\zeta}(\fs[i + 1:\,]; 1; n)).
\end{alignat*}
Then we obtain $\zetad_{A}(\sA^{\zeta}(\fs; 1; n)) \equiv 0 \bmod \zeta_{A}(q - 1) \cZ_{R}$ by induction on $\dep(\fs)$.
Similarly, if $\fn = \emptyset$, then by Propositions \ref{proposition-A-span} and \ref{proposition-prod-sum} and by induction on $r$, we have
\begin{align*}
    &\zetad_{A}(\sA^{\zeta}(\fs; 1; \emptyset)) \\
    &= - \sum_{1 \leq i \leq r} \zeta_{A}(\fs[\,:i]) \zetad_{A}(\fs[i + 1:\,], q) - \zeta_{A}(\fs, q) \\
    &\phantom{=} \; + L_{1} \sum_{1 \leq i \leq r} \zeta_{A}(\fs[\,:i]) \zetad_{A}(\fs[i + 1:\,], 1, q - 1) + L_{1} \zeta_{A}(\fs, 1) \zetad_{A}(q - 1) + L_{1} \zeta_{A}(\fs, 1, q - 1) \\
    &\phantom{=} \; + L_{1} \sum_{1 \leq i \leq r} \zeta_{A}(\fs[\,:i]) \zetad_{A}(\fs[i + 1:\,] \boxplus (1), q - 1) + L_{1} \zeta_{A}(\fs \boxplus (1)) \zetad_{A}(q - 1) \\
    &\phantom{=} \; + L_{1} \zeta_{A}(\fs \boxplus (1), q - 1) \\
    &\equiv - \sum_{1 \leq i \leq r} \zeta_{A}(\fs[\,:i]) \zetad_{A}(\sA(\fs[i + 1:\,]; 1; \emptyset)) - \zeta_{A}(\sA^{\zeta}(\fs; 1; \emptyset)) \\
    &\equiv 0 \bmod \zeta_{A}(q - 1) \cZ_{R}.
\end{align*}
\end{proof}

\begin{corollary}
For each $s \in \ZZ_{\ge 1}$, $\zetad_{A}(\sU^{\zeta}(s)) \equiv \zetad_{A}(s) \bmod \zeta_{A}(q - 1) \cZ_{R}$.
\end{corollary}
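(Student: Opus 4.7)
The plan is to derive this as a direct corollary of Proposition \ref{proposition-zeta-A}, by identifying the single relation $\sA^{\zeta}(\fs; m; \fn)$ that witnesses $\sU^{\zeta}$ on a single-entry index $(s)$ and checking that it falls under the hypotheses of that proposition.

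First, I split into cases according to whether $(s) \in \IT$. If $s \leq q - 1$, then $(s) \in \IT$, so by the third defining property of $\sU^{\zeta}$ we have $\sU^{\zeta}(s) = (s)$ and there is nothing to prove. If $s \geq q$, I appeal to the Remark immediately following Proposition \ref{proposition-A-span}, which says that any $\fa \in \cI$ can be uniquely expressed as $(\fs, \{ q \}^{m - 1}, \fn)$ with $\fs \in \IT$, $m \ge 1$, $\fn \in \cI'$, and that in this case $\fa - \sU^{\zeta}(\fa) = \sA^{\zeta}(\fs; m; \fn)$. For $\fa = (s)$ with $s = q$, the decomposition is $\fs = \emptyset$, $m = 1$, $\fn = \emptyset$; for $\fa = (s)$ with $s > q$, it is $\fs = \emptyset$, $m = 1$, $\fn = (s) \in \cI'$.

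Next, in both subcases $\fs = \emptyset$ (which trivially satisfies the condition ``$\fs = \emptyset$ or $s_{r} < q$''), $m = 1$, and $\dep(\fn) \leq 1$, so the hypotheses of Proposition \ref{proposition-zeta-A} are met. Hence
\begin{align*}
\zetad_{A}(s) - \zetad_{A}(\sU^{\zeta}(s)) = \zetad_{A}(\sA^{\zeta}(\emptyset; 1; \fn)) \equiv 0 \bmod \zeta_{A}(q - 1) \cZ_{R},
\end{align*}
which gives the claim.

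There is essentially no obstacle beyond bookkeeping: the entire content is contained in Proposition \ref{proposition-zeta-A}, and the only thing to verify is that the single-entry cases $(s)$ for $s \geq q$ fit into the decomposition $(\fs, \{q\}^{m-1}, \fn)$ with parameters covered by that proposition. The mild subtlety is remembering that $(s)$ for $s > q$ is decomposed with $m = 1$ (empty block of $q$'s) and $\fn = (s) \in \cI'$, rather than trying to force a decomposition with a nontrivial block of $q$'s; once that is observed, the proof is a one-line application.
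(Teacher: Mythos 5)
Your overall strategy is the same as the paper's: deduce the corollary by identifying $s - \sU^{\zeta}(s)$ with a single relation $\sA^{\zeta}(\emptyset; 1; \fn)$ having $\dep(\fn) \le 1$, and then invoke Proposition \ref{proposition-zeta-A}. The cases $1 \le s < q$ (where $\sU^{\zeta}(s) = s$) and $s = q$ (where $s - \sU^{\zeta}(s) = \sA^{\zeta}(\emptyset; 1; \emptyset) = (q) - L_{1}(1, q - 1)$) come out correctly.

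However, your identification in the case $s > q$ is wrong. You claim $s - \sU^{\zeta}(s) = \sA^{\zeta}(\emptyset; 1; (s))$, but that element is homogeneous of weight $q + s$: every term of $\sA^{\zeta}(\fs; m; \fn)$ has weight $\wt(\fs) + m q + \wt(\fn)$ (its first displayed term is $(\fs, \{ q \}^{m}, \fn)$, with $m$ rather than $m - 1$ copies of $q$). Since $\sU^{\zeta}$ is graded, $s - \sU^{\zeta}(s)$ lies in $\fh^{1}_{R, s}$, so the two cannot coincide. The correct element, which is what the paper records by quoting the explicit description of $\sU^{\zeta}$ in \cite[Definition 3.4]{CCM23}, is $\sA^{\zeta}(\emptyset; 1; s - q)$; there the index $(s)$ arises not as the concatenation term $(\{ q \}^{m}, \fn)$ but as the term $\{ q \}^{1} \boxplus (s - q) = (s)$. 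This is also why reading the parameters off the Remark after Proposition \ref{proposition-A-span} is hazardous here: under your reading the parametrization $\fa = (\fs, \{ q \}^{m - 1}, \fn)$ does not even match the weight of $\sA^{\bullet}(\fs; m; \fn)$, and for $(s)$ with $s > q$ the reduction goes through the $\boxplus$ term rather than through an actual block of $q$'s. The repair is local: taking $\fn = (s - q)$ one still has $\fs = \emptyset$, $m = 1$, and $\dep(\fn) = 1$, so Proposition \ref{proposition-zeta-A} applies and the congruence follows exactly as you intended.
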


\begin{proof}
By the explicit description of $\sU^{\zeta}$ in Definition 3.4 of \cite{CCM23}, we have
\begin{align*}
    s - \sU^{\zeta}(s) = \left\{ \begin{array}{@{}ll} 0 & (1 \leq s < q) \\ \sA^{\zeta}(\emptyset; 1; \emptyset) & (s = q) \\ \sA^{\zeta}(\emptyset; 1; s - q) & (s > q) \end{array} \right..
\end{align*}
\end{proof}

\subsection{MZDVs in characteristic zero} \label{subsection-MZDV-char-zero}

Let $\overleftarrow{\cI}^{\adm}$ be the set of indices $\fs = (s_{1}, \ldots, s_{r}) \in \cI$ with $\fs \neq \emptyset$ and $s_{r} > 1$, or $\fs = \emptyset$.

\begin{definition}
For each $\fs = (s_{1}, \ldots, s_{r}) \in \overleftarrow{\cI}^{\adm}$, we define the multiple zeta dagger value $\zetad(\fs)$ by
\begin{align*}
    \zetad(\fs) \coloneqq (- 1)^{\dep(\fs)} \sum_{1 \leq m_{1} \leq \cdots \leq m_{r}} \dfrac{1}{m_{1}^{s_{1}} \cdots m_{r}^{s_{r}}} \in \RR.
\end{align*}
\end{definition}

\begin{remark}
When $s_{r} = 1$, there are several choices for the definition of $\zetad(\fs)$.
Let $\zeta^{*}(\fs)$ (resp.\ $\zeta^{\sh}(\fs)$) be the harmonic (resp.\ shuffle) regularization of MZVs.
For each $\bullet \in \{ *, \sh \}$ and $\fs = (s_{1}, \ldots, s_{r}) \in \cI$, we define the regularization of the MZDV by
\begin{align*}
    \zeta^{\dagger, \bullet}(\fs) \coloneqq (- 1)^{\dep(\fs)} \sum_{\fn} \zeta^{\bullet}(\fn).
\end{align*}
Here, $\fn$ ranges over all indices of the form $\fn = (s_{r} \, \square_{r - 1} \, \cdots \, \square_{2} \, s_{2} \, \square_{1} \, s_{1})$, with each $\square_{i}$ equal to ``$+$'' or ``\,$,$\,''.
This definition is inspired by the regularization $\zeta^{\star, \bullet}(\fs)$ of MZSVs introduced by Muneta \cite[Section 2.3]{Mu09}.
We note that another regularization of MZSVs was introduced by Kaneko and Yamamoto \cite[Section 4]{KY18}, which was defined in terms of the iterated integral expressions of MZSVs studied by Yamamoto \cite{Ya17}.
According to Hirose, Murahara, and Ono \cite[Theorem 2.1]{HMO21}, this regularization is congruent to $\zeta^{\star, \sh}(\fs)$ modulo $\zeta(2) \fZ_{\QQ}$.
\end{remark}

The following example suggests that direct analogues of Theorem \ref{theorem-main} and Conjecture \ref{conjecture-zetad} may fail to hold in characteristic zero.

\begin{example} \label{example-duality}
By duality, we have $\zeta(2, 4) = \zeta(2, 1, 1, 2)$.
On the other hand,
\begin{align*}
    \zetad(2, 4) \equiv - \zetad(2, 1, 1, 2) \equiv \zeta(3)^{2} \bmod \zeta(2) \fZ_{\QQ}.
\end{align*}
Since it is conjectured that $\zeta(3)^{2} \not\equiv 0 \bmod \zeta(2) \fZ_{\QQ}$, we may have
\begin{align*}
    \zetad(2, 4) \not\equiv \zetad(2, 1, 1, 2) \bmod \zeta(2) \fZ_{\QQ}.
\end{align*}
\end{example}

\begin{remark}
Kaneko and Ohno \cite[Theorem 1.1]{KO10} proved a kind of duality for MZSVs.
In our notation, it is stated as
\begin{align*}
    \zetad(\{ 1 \}^{a}, b + 1) - \zetad(\{ 1 \}^{b}, a + 1) \in \QQ[\zeta(2), \zeta(3), \zeta(5), \ldots]
\end{align*}
for each $a, b \in \ZZ_{\ge 1}$.
Note that $(\{ 1 \}^{a}, b + 1) \in \overleftarrow{\cI}^{\adm} \setminus \cI^{\adm}$.
\end{remark}

We note that the derivation relations given by Ihara, Kaneko, and Zagier \cite[Corollary 6]{IKZ06} imply that the values $\zeta(\fs)$ $(\fs \in \cI^{\adm} \cap \overleftarrow{\cI}^{\adm})$ generate $\fZ_{\QQ}$ (as a $\QQ$-vector space).
Therefore, we consider the following problem:

\begin{problem}
(1)
Find a proper ideal $J \subsetneq \fZ_{\QQ}$ such that the assignment
\begin{align*}
    \zeta(\fs) \bmod J \mapsto \zetad(\fs) \bmod J \ \ \ (\fs \in \cI^{\adm} \cap \overleftarrow{\cI}^{\adm})
\end{align*}
induces a well-defined $\QQ$-algebra involution on $\fZ_{\QQ} / J$.

\noindent
(2)
Determine the smallest ideal $J \subset \fZ_{\QQ}$ as in $(1)$.

\noindent
(3)
In (1) and (2), replace $\fZ_{\QQ}$ by the space of formal multiple zeta values, namely, the quotient of the $\QQ$-vector space with basis $\cI^{\adm}$ by the extended double shuffle relations.
\end{problem}

\section*{Acknowledgments}

The author would like to thank Chieh-Yu Chang and Yen-Tsung Chen for helpful discussions, from which the ideas of this paper emerged.

\end{document}